\newcommand\cB{\mathcal{B}}
\newcommand\cC{\mathcal{C}}
\newcommand\cE{\mathcal{E}}
\newcommand\cF{\mathcal{F}}
\newcommand\cG{\mathcal{G}}
\newcommand\cL{\mathcal{L}}
\newcommand\cM{\mathcal{M}}
\newcommand\cP{\mathcal{P}}
\newcommand\cU{\mathcal{U}}
\newcommand\cW{\mathcal{W}}
\newcommand\cX{\mathcal{X}}
\newcommand\cY{\mathcal{Y}}
\newcommand\bE{\mathbb{E}}
\newcommand\bN{\mathbb{N}}
\newcommand\bP{\mathbb{P}}
\newcommand\bR{\mathbb{R}}
\newcommand\pp[1]{\left( #1 \right)}
\newcommand\gp[1]{\left\lbrace #1 \right\rbrace}
\newcommand\sq[1]{\left[ #1 \right]}
\newcommand\abs[1]{\left| #1 \right|}
\newcommand{\norm}[1]{\left\Vert #1\right\Vert}
\newtheorem{theorem}{Theorem}
\newtheorem{corollary}[theorem]{Corollary}
\newtheorem{lemma}[theorem]{Lemma}
\newtheorem{proposition}[theorem]{Proposition}
\theoremstyle{definition}
\newtheorem{definition}[theorem]{Definition}
\newtheorem{example}[theorem]{Example}
\newtheorem{remark}[theorem]{Remark}
\newcommand{\leqcx}{\leq_{\text{cx}}}
\begin{document}
\title{How to quantise probabilities while preserving their convex order}
\medskip

\author{\scshape{Marco Massa\thanks{\href{mailto:m.massa17@imperial.ac.uk}{m.massa17@imperial.ac.uk}} 
\;and\; 
Pietro Siorpaes\thanks{\href{mailto:p.siorpaes@imperial.ac.uk}{p.siorpaes@imperial.ac.uk}}}\\
{\small{\textit{Dept.~of Mathematics, Imperial College London, 16-18 Princess Gardens, SW71NE}
\\ 
\small{\textit{London, UK}}}}
}

\maketitle

\vspace*{-2cm}
\section*{
\begin{center}
	\small
	Abstract
	\end{center}
}
\vspace*{-.6cm}
\hfil
\begin{minipage}[c]{0.8\textwidth}
\small
We introduce an algorithm which, given probabilities $\mu  \leqcx \nu$ in convex order and defined on a separable Banach space $B$, constructs finitely-supported approximations $\mu_n \to \mu, \nu_n\to \nu$ which are in convex order   $\mu_n \leqcx \nu_n$. We provide upper-bounds for the speed of convergence, in terms of the Wasserstein distance. 
We discuss the (dis)advantages of our algorithm and its link with the discretisation of the Martingale Optimal Transport problem, and we illustrate its  implementation with numerical examples. 
We study the operation which, given $\mu$/$\nu$ and some (finite) partition of $B$, outputs $\mu_n$/$\nu_n$, showing that applied to a probability $\gamma$ and to all partitions it outputs the set of all probabilities $\zeta\leqcx \gamma$.
\end{minipage}
\hfil

\medskip

\noindent \textbf{MSC2020 subject classifications:} 
Primary 49M25, 60E15; secondary 60-08, 90C08.\\
\noindent \textbf{Keywords:} \textit{Convex order, quantisation, barycentric quantisation, martingale optimal transport, Wasserstein distance.}

\section{Introduction}
\label{Section: Introduction}
The main contribution of this paper is the introduction of an algorithm which, given probabilities $\mu  \leqcx \nu$ in convex order and with finite first moment, defined on a separable Banach space $B$, constructs finitely-supported approximations $\mu_n,\nu_n$ which are in convex order   $\mu_n \leqcx \nu_n$ and which converge to $\mu,\nu$ in Wasserstein distance.  

Obtaining a quantisation algorithm which preserves the convex order is important because of the link with the   MOT (\emph{Martingale Optimal Transport}) problem, a version of the OT problem in which one minimises not over  the set $\Pi(\mu,\nu)$ of all transports $\pi$ from $\mu$ to $\nu$, but rather over the  subset $\cM(\mu,\nu)$ of those transports which additionally satisfy the martingale constraint, i.e.~which are the joint laws of $(X,Y)$ such that 
$$X\sim \mu, \quad Y \sim \nu, \quad \bE[Y|X]=X.$$ The link is provided by Strassen's theorem, which states that $\cM(\mu,\nu)$  is non-empty if and only if $\mu  \leqcx \nu$.  It is important to quantise the probabilities since, in the case of finitely-supported measures, the MOT problem is a linear program, and is thus easily solved numerically. 

\medskip

This problem has already been considered by several authors, all of whom consider probabilities in the space $\cP_p(B)$ of measures on $B$ with finite $p^{th}$ moment (where $p\in [1,\infty)$), endowed with the $p$-Wasserstein distance $\cW_p$, in the setting of finite-dimensional $B$.
 
The first result, due to Baker \cite{Baker2015}, considers the case where the measures are defined on the real line $B=\bR$. In this case, Baker provides an algorithm which, given $\mu,\nu\in \cP_1$ and $n\in \bN$, returns a measure $\mu_n\eqqcolon \cU_n(\mu)$ supported on at most $n$ points and such that  $\cU_n(\mu)\to \mu$ in $\cP_1$, and $\mu  \leqcx \nu$ implies $\cU_n(\mu)  \leqcx \cU_n(\nu)$. While Baker's $\cU$-quantisation $\cU_n(\mu)$ provides a great solution to the problem at hand,  it is only defined in dimension one, and it turns out that finding a quantisation which preserves the convex order, just like most questions about MOT, is a lot more challenging in dimension higher than $1$; indeed, as remarked by   \cite{Beiglbock2022} 
	\textit{`it is a highly intriguing challenge to extend the martingale transport theory to the case where $\mu, \nu$ are supported on $\bR^d, d > 1$'}.

The \emph{dual quantisation} proposed in \cite{Pages2012},  though defined for general $B=\bR^N$, preserves the convex order only in dimension $N=1$ \cite{Alfonsi2019}. A variant is considered in \cite{JourdainPages2022}, who applied the optimal quadratic quantisation to the first marginal $\mu$, and the dual quantisation  to the second marginal $\nu$. This scheme does preserve the convex order in any dimension; however, both of the algorithms in \cite{Pages2012, JourdainPages2022}  are only defined for probabilities $\mu,\nu$ with compact support, and  do not generalise to the case of several (not just two) marginals. 

Another two discretisation techniques  have been considered in  \cite{Alfonsi2019IJTAF, Alfonsi2019}.  Given $\mu \leqcx \nu$   and \emph{arbitrary}  finitely-supported probabilities $(\mu_n,\nu_n)$ converging to $(\mu, \nu)$ in $\cP_p$,   the metric projection $\alpha_n$ of $\mu_n$ on $\{\alpha: \alpha \leqcx \nu_n\}$ exists and it satisfies $\alpha_n \to \mu$ in $\cP_p$ (and, trivially, $\alpha_n \leqcx \nu_n$ and $\alpha_n $ is finitely-supported). Analogously,  the metric projection $\beta_n$ of $\nu_n$ on $\{\beta: \mu_n \leqcx   \beta\}$ exists and satisfies $\beta_n \to \nu$ in $\cP_p$ (and, trivially, $\mu_n \leqcx \beta_n$ and $\beta_n $ is finitely-supported).
While this method is conceptually very neat, it does have the disadvantage that $\alpha_n,  \beta_n$  cannot be explicitly computed, though this issue admits the following  partial workaround. If  $\mu_n$ is the empirical measure $\frac{1}{n}\sum_{i=1}^n \delta_{X_i}$  corresponding to IID random variables $X_i\sim \mu$ and analogously for  $\nu_n$,  then  the corresponding  random measure $\alpha_n$  solves a quadratic optimisation problem with linear constraints, and so it can be computed numerically (not so for $\beta_n$) and $(\alpha_n, \nu_n) \to (\mu, \nu)$ a.s..

An entirely different approach is investigated by \cite{Guo2019}. Their numerical scheme relies on an \emph{arbitrary} discretisation of the measures $\mu,\nu$,  along with an appropriate \emph{relaxation} of the martingale constraint, which only requires that
\begin{equation}
\label{relaxed martingale condition}
	\bE\sq{\big\vert\bE\sq{Y \vert X}-X\big\vert} \leq \epsilon ,
\end{equation} 
holds for a $\epsilon\geq 0$, where $X,Y$ have laws $\mu,\nu$; the case $\epsilon=0$ corresponds to the martingale constraint. 

The algorithm which we propose constructs $\mu_n$  in a way which generalises  Baker's $\cU$-quantisation (though this link is not obvious), and it is what we call a \emph{proper barycentric} quantisation. By this, we mean that $\mu_n$, which being finitely-supported  is of the form 
$$
\mu_n  = 
\sum_{i=1}^n \alpha_i^{}\delta_{x^{}_i} ,
\quad \text{ where } 
\quad  
\alpha_i> 0 \quad  \forall i, 
\quad 
\sum_{i=1}^n\alpha^{}_i =1, 
$$
satisfies $\mu_n \leqcx \mu$, and can be calculated from the knowledge of $\mu$ alone, in the sense that there exist a finite partition  $\Pi^n=\gp{B_1,\ldots, B_n}$ of $B$, made of Borel sets, such that $\alpha_i=\mu(B_i)$, and $x_i$ is the barycentre of the probability $\mu_i\coloneqq \frac{\mu( \cdot \cap B_i)}{\mu(B_i)}$, for all $i$. 

Our construction builds instead $\nu_n$ as a (not necessarily proper) \emph{barycentric} quantisation of $\nu$ (see Definition \ref{Definition: barycentric quantisation measures});  it turns out that this more general construction, unlike the proper quantisation, allows to obtain all (and only) the finitely-supported probabilities $\nu_n \leqcx \nu$. To build a barycentric quantisation $\nu_n$ of $\nu$  requires the knowledge not just of $\nu$, but rather of a martingale transport $\pi$ from $\mu$ to $\nu$, and the choice of two partitions $\Pi_1^n=(P_{1,i}^n)_i,\Pi_2^n=(P_{2,j}^n)_j$ of $B$; if $\Pi_1^n$ is chosen to be equal to  the partition $\Pi^n$ used to build $\mu_n$, and $\mu \leqcx \nu$, then  $\mu_n\leqcx  \nu_n$. 

If $(\Pi_1^n)_n, (\Pi_2^n)_n$ are refining  then $(\mu_n)_n$ and $(\nu_n)_n$ are increasing in convex order, and if moreover $\vee_n\sigma(\Pi_1^n)$ and $\vee_n\sigma(\Pi_2^n)$ equal the Borel $\sigma$-algebra $\cB(B)$ of $B$ then the corresponding $(\mu_n)_n, (\nu_n)_n$ converge to $\mu, \nu$. We also prove the upper-bounds 
$$
\cW_p(\mu_n,\mu)\leq 	\sum_i (\text{diam}\pp{P_{1,i}})^p \mu\pp{P_{1,i}} , \quad \cW_p(\nu_n,\nu)\leq \sum_{j} (\text{diam}\pp{P_{2,j}})^p \nu\pp{P_{2,j}} .
$$
 Needing a $\pi\in \cM(\mu,\nu)$   is a con of our approach, because only $\mu,\nu$ (not $\pi$) are  inputs of the MOT problem  and while the existence of such a $\pi$ is guaranteed by Strassen's Theorem, algorithms which produce a $\pi\in \cM(\mu,\nu)$ given  arbitrary $\mu \leqcx \nu$ are so far only known in dimension one \cite{BeiglbockJuillet2016,Juillet2016}. We also mention the constructions reported in \cite[Section 4.2]{Labordere2017} e.g.~Bass's construction \cite{Bass1983} related to the Skorokhod embedding problem and the local variance gamma model of \cite{CarrNadtochiy2017} motivated by the application of MOT to obtain an arbitrage-free measure consistent with given market option prices. We leave to separate papers  the difficult task of developing  algorithms which work in the multi-dimensional setting.

On the other hand, our approach has many pros: it leads to explicit expressions for $\mu_n,\nu_n$ which can easily be calculated numerically by evaluating integrals, it outputs non-random $\mu_n \leqcx \nu_n$, it works when $B$ is an arbitrary separable Banach space and the proofs turn out to be easy,  as soon as one has the right idea. The idea is to consider an analogous statement for random variables, just like Skorokhod's representation Theorem allows to draw a parallel between weak convergence of measures and convergence in probability of random variables. This is made possible by the equivalent characterisation of the convex order provided by Strassen's Theorem.

\medskip

As we mentioned, finding a quantisation which preserves the convex order is of interest when one wants to numerically solve the MOT problem; however, this is only one piece of the puzzle, since 
to carry out this solution  it is necessary to identify conditions under which $\text{MOT}_n \to \text{MOT}$, i.e.,  the MOT problem with marginals $(\mu_n,\nu_n)$ converges to the MOT problem with marginals $(\mu,\nu)$ (in an appropriate sense) when 
\begin{align}
\label{eq: input MOT_n to MOT}
	\mu_n \to \mu, \quad  \nu_n \to \nu, \quad \mu_n \leqcx \nu_n, \quad \mu \leqcx \nu .
\end{align}
Solutions to this problem have been provided from different perspectives in \cite{Backhoff2019, Juillet2016, Wiesel2019}, who essentially showed that the MOT problem is stable, i.e., no conditions other than (\ref{eq: input MOT_n to MOT})  are required to obtain the convergence of the MOT problems. 
 However, all these papers consider only the one-dimensional setting and, as usual, the situation in the multi-dimensional setting is a lot more complicated; in particular, the MOT problem lacks stability in this setting \cite{BrJu22}. While we leave for a separate paper the work of finding conditions under which $\text{MOT}_n \to \text{MOT}$,  in this paper we do provide a  statement in this direction, which in particular shows that if the transport $\pi$ which one uses to build $(\nu_n)_n$ already happens to be optimal for the MOT problem (with inputs $(\mu,\nu)$), then $\text{MOT}_n \to \text{MOT}$. 
 Of course, this is a rather weak and not particularly useful statement, because the whole point of using $\text{MOT}_n$ to approximate $\text{MOT}$ is that one does not know a priori the optimiser of MOT. 
We also carry out numerical examples in which we illustrate the implementation of the proposed construction.
  
In the last section, we define in somewhat more abstract terms what is a barycentric quantisation and we study the properties of such construction. 
In particular, we show that the (proper) barycentric quantisations, which we define separately for measures and for random variables, are closely connected and that one can characterise which measures $\zeta$ such that $\zeta\leqcx \gamma$ are finitely-supported (resp.~supported on at most $n$ points) using barycentric quantisations of $\gamma$, but not using only \emph{proper} barycentric quantisations. 
	 
Finally, we recall that it is is natural to consider a more general OT problem, in which there are multiple marginals, rather than just two as described above. In this paper we concentrate on the case of two marginals, because the extension of our results to the case of finitely many marginals is trivial, though it complicates the notation.

\medskip

The rest of this article is structured as follows.
In Section \ref{Section: Optimal Transport problems}, we briefly introduce the OT problem and its several variants with particular focus on the MOT problem. 
In Section \ref{Section: discrete methods}, we discuss the discretisation technique for an OT problem with additional linear (or convex) constraints and state several questions which we will then address in this paper in the setting of MOT. 
The core of our paper is Section \ref{Section: discretisations preserving the convex order}, in which we introduce our quantisation algorithm which preserves the convex order, providing both an explicit representation, bounds on the speed of convergence and a remark about stability.
In Section \ref{Section: Numerical examples}, we provide several examples in which we show the implementation of the proposed martingale quantisation and the resolution of the corresponding discretised MOT problem.
Finally, in Section \ref{Section: Barycentric quantisation} we introduce  the notion of barycentric quantisation and study its properties.

\section{Optimal Transport problems}
\label{Section: Optimal Transport problems}
In this section, we recall the OT problem,  briefly mention several variants thereof (among which the MOT) and introduce the notation and setting used in this paper.

\subsection{The classic Optimal Transport problem}
\label{Section: The classic Optimal Transport problem}
 Consider probability measures $\mu, \nu$ on Polish spaces $\cX,\cY$ endowed with their Borel $\sigma$-algebras, and a non-negative measurable cost function $c:\cX\times \cY \longmapsto [0,\infty]$.  In the 1781 article  \cite{Monge},  Monge considered (a special case of) the OT problem 
\begin{equation}
\label{Monge problem}
	\inf_{T_\#\mu=\nu}\int_\cX c(x,T(x))d\mu(x) ,
\end{equation}
where the infimum runs over all  measurable function $T:\cX\rightarrow \cY$, called \emph{transport maps}, such that  $T_\#\mu\coloneqq\mu \circ  T^{-1}$ equals $\nu$. This constraint ensures that a feasible transport map is one that pushes forward $\mu$ into $\nu$, i.e.~it does not waste (or generate) mass during the transportation. This  formulation of the OT problem has drawbacks, mostly notably the non-linearity of the constraint makes it very complex.

In 1942, Kantorovich \cite{Kantorovich1, Kantorovich2} formulated the OT problem from a  different perspective, which has become the standard one. Instead of minimising over transport maps $T$, he used as variables \emph{transport plans}, defined as those probability measures $\pi\in \mathcal{P}\pp{\cX\times \cY}$ on the product space $\cX\times \cY$ (endowed with its Borel $\sigma$-algebra $\cB(\cX\times \cY)=\cB(\cX) \times \cB(\cY)$), which have marginals $\mu$ and $\nu$, i.e.~ 
\begin{equation}
\label{marginals mu nu}
	\pi\pp{\cdot\times \cY} = 
	\pi \circ P_x^{-1} = \mu 
	\quad, 
	\quad
	\pi\pp{\cX\times \cdot} = 
	\pi \circ P_y^{-1} = \nu.
\end{equation}
where $P_x$ and $P_y$ denote the projections on the first and second coordinate.

Denoting with  $\Pi\pp{\mu, \nu}$  the set of such transport plans, Kantorovich's formulation of OT problem can be stated as
\begin{equation}
\label{Kantorovich problem}
\tag{OT}
	P\pp{\mu, \nu}\coloneqq
	\inf_{\pi \in \Pi\pp{\mu, \nu}} \int_{\cX\times \cY} c(x, y)d\pi(x, y),
\end{equation}
or, in probabilistic jargon, as
$$
P\pp{\mu, \nu}\coloneqq
\inf_{X\sim \mu, Y\sim \nu}\bE\sq{c\pp{X, Y}}, 
$$
the infimum being over the set of (joint laws $\pi$ of) random variables $X, Y$ with laws $\cL(X) = \mu, \cL(Y) = \nu$. 
We always denote with $(\Omega, \cF, \bP)$ the underlying probability space, and $\bE$ denotes the expectation with respect to $\bP$. If we want to highlight the fact that $(X,Y)$ have law $\pi$, we write $\bE^{\pi}\sq{f\pp{X, Y}}$ for $\bE\sq{f\pp{X, Y}}$ (we use analogous notations for the conditional expectation).

Since the objective function of (\ref{Kantorovich problem}) is linear in $\pi$ and $\Pi\pp{\mu, \nu}$ is defined by linear inequalities, this formulation renders the OT problem a linear optimisation problem, thus making it much more tractable. Moreover, $\Pi\pp{\mu, \nu}$ is (non-empty\footnote{It always contains the product measure $\mu \times \nu$.} and) compact with respect to the weak topology of probability measures, and this ensures the existence of an optimal solution under very mild assumptions.

Clearly, the two formulations are closely related and the concept of transport plan introduced by Kantorovich extends the transport map, in the sense that it allows mass positioned at $x$ to be reallocated to multiple positions in $\cY$. In particular, if $\pi^*$ is an optimal transport plan and is of the form $\pi^*= ($Id$, T^*)_\#\mu $ for some map $T^*$, then $T^*$ is an optimal transport map.

OT theory permits to define a most useful distance on the space of probability measures. If $\cX=\cY$ is a Polish space equipped with a metric $d$, $p\in [1,\infty)$, consider the OT problem with cost function  $c\coloneqq d^p$, i.e.
\begin{equation}
\label{Wasserstein distance}
	\cW_p\pp{\mu, \nu} \coloneqq 
	\pp{ \inf_{\pi \in \Pi\pp{\mu, \nu}} \int_{\cX\times \cX} d\pp{x, y}^p d\pi(x, y)}^{1/p} .
\end{equation}  
The quantity $\cW_p$ defines a distance on the space\footnote{Though a priori the definition of $\mathcal{P}_p(\cX)$ seems to depend on the choice of $x_0\in \cX$, it is easy to show that it does not.} 
$$
\mathcal{P}_p(\cX)\coloneqq
\gp{\mu \in \cP(\cX): \int d(x_0,x)^p \mu(dx)<\infty} ,
$$ 
of probability measures on $\cX$ with finite moments of order $p$, often called the $p$\emph{-Wasserstein distance}. 

\begin{remark}
\label{Remark: equivalence convergence C0b C0p}
The following are equivalent
\begin{enumerate}
	\item $ \cW_p(\mu_n,\mu)\to 0$
	\item 	
		\begin{equation}
		\label{eq: convergence}
		\tag{$\star$}
			\int f d\mu_n \to \int f d\mu
		\end{equation}
		holds for all $f \in C^0_p(\cX)$ i.e~all continuous functions $f:\cX\to\bR$ for which there exists $C\in \bR$ such that
		$$
		\abs{f(x)} \leq C\pp{1+d(x,x_0)^p} 
		$$
 		for some (or equivalently, for any) $x_0 \in \cX$.
	\item 
		$\mu_n \to \mu$ \emph{weakly} (i.e.~(\ref{eq: convergence}) holds for $f:\cX \to \bR$ bounded and continuous) and (\ref{eq: convergence}) holds for $f(x) \coloneqq 1+d(x,x_0)^p$ for some (or, equivalently, for any) $x_0 \in \cX$.
\end{enumerate}
In particular, $\cW_1(\mu_n,\mu)\to 0$ if and only if $\int f d\mu_n \to \int f d\mu$ for every continuous $f:\cX \to \bR$  with sub-linear growth, and when $d$ is bounded, $\cW_p$ metrises the weak convergence. Finally,  $\mathcal{P}_p(\cX)$, metrised as always  by $\cW_p$, is complete and separable \cite[Theorem 6.18]{Villani2007}. 
\end{remark}

\subsection{Constrained OT}
\label{Section: Constrained OT}
Interest in OT has been growing over the past two decades due to its wide applications to different fields (e.g.~mathematical finance, economics, machine learning and image processing). It is then not surprising that many interesting variants of the original OT problem has been introduced and studied by many different authors. 
For example, in \emph{entropic} OT, one replaces the linear cost functional $\pi \mapsto \int c d\pi$ with a strictly convex one, which much improves the speed of convergence of numerical algorithms to solve the problem. 
Alternatively, one can keep the cost functional unchanged, but require the transports to satisfy additional constraints. In this case we talk of the \emph{constrained} OT problem (\ref{eq: cOT}), defined as 
\begin{equation}
	\label{eq: cOT}
	\tag{cOT}
	P_{\text{c}}\pp{\mu, \nu} \coloneqq \inf_{\pi \in \Pi_{\text{c}}\pp{\mu, \nu}} \int c d\pi ,
\end{equation}
thus substituting in  (\ref{Kantorovich problem}) the set $\Pi\pp{\mu, \nu}$ of transport plans by a set $\Pi_{\text{c}}\pp{\mu, \nu}$ of  transport plans which satisfy some additional constraints. As long as $\Pi_{\text{c}}\pp{\mu, \nu}$ is convex, (\ref{eq: cOT}) is a convex optimisation problem, and thus remains tractable. Several instances of (\ref{eq: cOT}) with linear constraints are found in the literature, e.g.~: 
\begin{enumerate}
\item \cite{Kormal14} considers capacity constraints, i.e.~those transports $\pi \ll \cL^{2N}$ whose density with respect to the Lebesgue measure $\cL^{2N}$ is bounded above by a fixed constant.
\item Working in a multi-marginal setting, \cite{Backhoff2017} consider \emph{causal} transports (or the closely-related bi-causal transports).  
These are those  joint laws $\pi$ of $(X, Y)$, with $X=\pp{X_{i}}_{i=1}^N, Y=\pp{Y_{i}}_{i=1}^N$ with values in $\bR^N$,  for which the conditional law of $Y_{1:t}\coloneqq\pp{Y_{1}, \ldots, Y_{t}}$ given $\pp{X_{1}, \ldots, X_{N}}$ is the same as the conditional law of $Y_{1:t}$ given $\pp{X_{1}, \ldots, X_{t}}$, for every $t=1, \ldots, N$.
\item Many authors considered the martingale constraint $ \bE^{\pi}\sq{Y\vert X}=X$, of which we talk in more detail in the next section.
\item  \cite{Zaev2015} considers the general  OT problem with linear constraints, and then discusses in more detail the two cases of transports which satisfy the martingale constraint, and of transports invariant under the  action of a (product) group.
\end{enumerate}
Unlike the classic OT problem, for which $\Pi\pp{\mu, \nu}$ is always non-empty, it can happen that  $\Pi_{\text{c}}\pp{\mu, \nu} = \emptyset$, in which case (\ref{eq: cOT}) is of no interest; this leads us to the following definition. 
\begin{definition}
	We say that $\pp{\mu, \nu}$ is \emph{viable} (for (\ref{eq: cOT})) if $\Pi_{\text{c}}\pp{\mu, \nu} \neq \emptyset$.
\end{definition}
It is of course of interest to characterise when $\pp{\mu, \nu}$ is viable; this depends on the specific nature of the constraints. In this regard, it is interesting to recall Tchakaloff's Theorem \cite{bayer2006proof}, which states that given $\mu\in \cP_1(B)$, if $B=\bR^d$ and $f^1, \ldots f^m\in L^1(\bP, B)$ represent the linear constraints $\int f^i d \mu=\int f^i d \hat{\mu}, i=1, \ldots, m$ to be verified by some looked-for finitely-supported $\hat{\mu}$, then such a quantisation $\hat{\mu}$ of $\mu$ always exists. Such theorem also admits a martingale version \cite{BeNu14}, and this is not at all obvious since the martingale constraint corresponds to infinitely many linear constraints. However, theese theorems only provide the existence of such quantisations, they do not describe a procedure to construct them explicitly.

\subsection{Martingale Optimal Transport}
\label{Section: MOT}
\indent The first formulation of the MOT problem appeared in \cite{Beiglbock2013} motivated by the \emph{model-independent pricing} problem in Mathematical Finance. The aim of this problem is to find lower and upper bounds for the price of an exotic pay-off resulting by all arbitrage-free  models consistent with some market specifications.  
The no-arbitrage condition translates into the requirement that $(X, Y)$ form a martingale (under the pricing measure), i.e.~$\bE\sq{Y|X}=X$, or equivalently  
$$
\bE\sq{(Y-X)g(X)}=0 , \text{ for all } g\in C^0_b(B),
$$
where $C^0_b(B)$ denotes the set of continuous bounded functions from $B$ to $\bR$.
In this case, we need to ask that $B\coloneqq\cX=\cY$ has an additional vector space structure, and thus we assume that $B$ is a separable Banach space. Moreover, since $X, Y$ need to be integrable, we must ask that their laws $\mu, \nu$ have finite first moments, i.e.~$\mu, \nu\in \cP_1(B)$. 
As usual,  $L^1\pp{\bP, B}$ denotes  the set of of integrable $B$-valued random variables.
Recall that the conditional expectation of a $B$-valued random variable is  defined and studied in \cite[Section 5.3]{Str10}, making use of Bochner's theory of integration. 
 
In MOT one considers as the set $\Pi_{\text{c}}\pp{\mu, \nu}$ of constrained transports the set $\cM\pp{\mu, \nu}$ of \emph{martingale couplings} (a.k.a.~\emph{martingale transports}) between $\mu$ and $\nu$, i.e.~laws of $(X, Y)$ such that $(X, Y)$ is a martingale, $X\sim \mu, Y\sim \nu$.   In more analytic terms,
$$ 
\cM\pp{\mu, \nu} = 
\gp{ \pi \in \Pi\pp{\mu, \nu}:  \int g(x)(y-x) d \pi(x, y)=0 \quad \text{ for all } g\in C^0_b(B)} .
$$
The existence of a martingale coupling is not always guaranteed as in classical OT; however, several equivalent characterisations of viability are known; to introduce the most important one, we need a definition.
\begin{definition}
\label{Definition: increasing convex order}
We say that probabilities $\mu, \nu$ on $B$ with finite first moments  are \emph{increasing in convex order}  (denoted by $\mu  \leqcx \nu$) if
$$
\int_{B} f(x)d\mu(x) \leq \int_{B} f(x)d\nu(x) ,
$$
for holds every Lipschitz\footnote{One can equivalently ask such inequality to hold for all \emph{continuous} convex functions, because every such function $f$ is the increasing limit of Lipschitz convex functions $f_n$ ($f_n$ can be obtained by inf-convolution between $f$ and the map $x \mapsto n \|  x\|_B$).
However, we prefer using Lipschitz functions because in this case the  integrals $\int f d\mu, \int f d\nu$ cannot take the value $\infty$.} 
convex function $f:B \to \bR$.
\end{definition}

An application of Jensen's inequality implies that $\mu  \leqcx \nu$  is necessary to ensure that $(\mu, \nu)$ is a viable input for the MOT problem.  Moreover, sufficiency was proved by many authors in different settings; the case of $B$ separable Banach space is due to   Strassen \cite{Strassen1965}. To summarise, we have the following
\begin{theorem}[Strassen]
\label{th: Strassen}
Given probabilities $\mu, \nu \in \cP_1(B)$ on a separable Banach space $B$, $\mathcal{M}\pp{\mu, \nu}\neq \emptyset$ holds if and only if $\mu \leqcx \nu$.
\end{theorem}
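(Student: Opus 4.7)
The \emph{only if} direction is immediate from conditional Jensen's inequality: given $\pi \in \cM(\mu,\nu)$ with $(X,Y) \sim \pi$, for any Lipschitz convex $f:B \to \bR$ the Banach-valued conditional expectation from Bochner integration theory yields
$$
\int f \, d\mu = \bE[f(X)] = \bE\sq{f\pp{\bE[Y \vert X]}} \leq \bE\sq{\bE[f(Y)\vert X]} = \int f \, d\nu,
$$
so $\mu \leqcx \nu$.

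For the nontrivial \emph{if} direction, my plan is to first establish the finite-dimensional case $B = \bR^d$ via Hahn--Banach duality, and then lift it to a separable Banach space by finite-dimensional approximation. Argue by contradiction: suppose $\mu \leqcx \nu$ while $\cM(\mu,\nu) = \emptyset$. The set $\Pi(\mu,\nu) \subset \cP_1(\bR^d \times \bR^d)$ is nonempty, convex, and weakly compact, whereas the martingale constraint cuts out a weakly closed affine subspace; their disjointness combined with Hahn--Banach separation in the appropriate function-space duality produces continuous $u, v : \bR^d \to \bR$ (of sub-linear growth, so integrable against $\mu,\nu$) and bounded continuous $g : \bR^d \to \bR^d$ such that
$$
u(x) + v(y) + \langle g(x), y - x \rangle \geq 0 \text{ for all } x,y, \qquad \int u \, d\mu + \int v \, d\nu < 0 .
$$
The key trick is then to take the convex envelope in $y$: define $w(y) \coloneqq \sup_{x \in \bR^d}\sq{-u(x) - \langle g(x), y - x \rangle}$. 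As a supremum of affine functions, $w$ is convex and Lipschitz with constant $\|g\|_\infty$. Setting $y = x$ in the defining supremum gives $w(x) \geq -u(x)$ pointwise, while the original inequality gives $w \leq v$. Applying $\mu \leqcx \nu$ to the Lipschitz convex $w$ and chaining these,
$$
0 \leq \int (u + w) \, d\mu = \int u \, d\mu + \int w \, d\mu \leq \int u \, d\mu + \int w \, d\nu \leq \int u \, d\mu + \int v \, d\nu ,
$$
contradicting the strict negativity.

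To lift to a separable Banach $B$, I would fix a dense countable $\gp{e_i} \subset B$, set $V_n \coloneqq \text{span}(e_1,\ldots,e_n)$, and choose continuous linear maps $P_n : B \to V_n$ with $P_n \to \text{Id}$ pointwise (via a Schauder basis if available, or a more delicate construction otherwise). Since continuous linear maps preserve the convex order, $(P_n)_\# \mu \leqcx (P_n)_\# \nu$, so the finite-dimensional case yields $\pi_n \in \cM((P_n)_\# \mu, (P_n)_\# \nu)$; lifting these via disintegration to couplings on $B \times B$ and using tightness of $\gp{\mu,\nu}$ on the Polish space $B$, one extracts a weak limit $\pi \in \Pi(\mu,\nu)$. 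The main technical obstacle I anticipate is precisely this last step: verifying that the limiting $\pi$ genuinely satisfies $\bE^\pi[Y \vert X] = X$ in the Bochner sense on $B$, rather than merely at the level of finite-dimensional projections. Separability of $B$ is what makes this go through, because $B^*$ admits a countable weak-$*$-determining subset, reducing the martingale constraint to countably many scalar constraints $\bE^\pi[\phi(Y-X) g(X)] = 0$, each of which should pass to the limit from the corresponding (projected) identities satisfied by the $\pi_n$.
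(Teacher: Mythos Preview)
The paper does not supply its own proof of Strassen's theorem: it states the result and attributes the Banach-space case to Strassen \cite{Strassen1965}, after noting that necessity is Jensen's inequality. So there is no proof in the paper to compare your proposal against.

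That said, your proposal has a genuine gap in the infinite-dimensional step. You propose to lift from $\bR^d$ to a separable Banach space $B$ via continuous linear finite-rank maps $P_n:B\to V_n$ with $P_n\to\text{Id}$ pointwise, ``via a Schauder basis if available, or a more delicate construction otherwise''. But by Enflo's theorem there exist separable Banach spaces that lack even the approximation property, so no such sequence of finite-rank operators exists in general; there is no ``more delicate construction'' to fall back on. Strassen's original argument does not proceed by finite-dimensional approximation of $B$ at all: it works directly on the Polish space, using a compactness/selection argument (or, in modern presentations, Choquet-type or disintegration arguments) that never requires projecting $B$ onto finite-dimensional subspaces. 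If you want an approximation route that stays within this paper's toolkit, a more promising line is to approximate $\mu,\nu$ by finitely-supported measures in convex order (which is precisely what Section \ref{Section: discretisations preserving the convex order} does, though it \emph{uses} Strassen, so you would need an independent finite-support Strassen first) and then pass to the limit in $\cM$ using tightness.

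A smaller issue: in your $\bR^d$ argument, the passage from ``Hahn--Banach separation of $\Pi(\mu,\nu)$ from the martingale set'' to a separating functional of the specific form $u(x)+v(y)+\langle g(x),y-x\rangle$ is not a direct consequence of separation; it is essentially the MOT duality statement and needs its own argument (e.g.\ a min--max step, or identifying the relevant cone and its polar). Once that form is granted, your convex-envelope trick with $w$ is correct and standard.
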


\noindent Since the seminal papers \cite{Beiglbock2013}, a sizeable stream of papers has been devoted to studying MOT and its applications in mathematical finance \cite{Labordere2017} and references therein.

\section{Discrete methods for OT}
\label{Section: discrete methods}

\subsection{Discretisation of classic OT}
\label{Section: Classical OT}
\noindent Since the OT problem (\ref{Kantorovich problem}) is an infinite-dimensional LP (Linear Program), a tempting idea for  solving it numerically is  to reduce to the case in which $\mu, \nu$ are finitely-supported, so as to make (\ref{Kantorovich problem}) a finite-dimensional LP, which  can then be easily solved numerically with several algorithms. With this in mind, one wants to construct a sequence of finitely-supported measures $\pp{\mu_n}_{n\in\bN},\pp{\nu_n}_{n\in\bN}$  converging to the target measures $\mu, \nu$, in order to calculate  $P\pp{\mu, \nu}$ as the limit of $P\pp{\mu_n,\nu_n}$, since the latter can be calculated numerically.   This procedure  relies on the  fundamental stability result in OT  \cite[Theorem 5.20]{Villani2007}, which ensures the continuity of the map $(\mu, \nu)\longmapsto P(\mu, \nu)$ with respect to the Wasserstein distance. 

Let's explicitly formulate the discrete OT problem (i.e.~the OT problem relative to finitely-supported measures $\mu, \nu$) as an LP. When $\mu, \nu$  are finitely-supported, they can be written as convex combinations of Dirac measures. If they are supported on at most $n\in \bN\setminus\gp{0}$ points,  we can write
\begin{align}
\label{convex if Dirac}
\begin{split}
\mu_n & = \sum_{i=1}^n \alpha_i^{}\delta_{x^{}_i}\;,\;\;\;\;\; \text{where} \quad  \alpha_i\geq 0 \quad  \forall i, \quad \sum_{i=1}^n\alpha^{}_i =1\;,  \\
\nu_n & = \sum_{j=1}^n \beta_j^{}\delta_{y^{}_j}\;,\;\;\;\;\; \text{where} \quad  \beta_j\geq 0 \quad  \forall j, \quad \sum_{j=1}^n\beta^{}_j =1 \;.
\end{split}
\end{align}
In this case, the cost function is defined as the matrix $c=\pp{c_{i, j}}_{i, j=1}^n$ where $c_{i, j} \coloneqq c(x_i,y_j) \in \bR_+$, and the feasible set is the polytope
\begin{flalign*}
	\Pi\pp{\mu_n,\nu_n} &\coloneqq
	\left\lbrace p \in \bR_+^{n\times n}: \; \sum_{j=1}^n p_{i, j} = \alpha_i \;; \;\;
	\sum_{i=1}^n p_{i, j} = \beta_j \;,\forall i, j=1,...,n  \right\rbrace .
\end{flalign*}
Notice, that we are denoting the discretised version of the transport (previous denoted by $\pi$) with the matrix $p \in \bR_+^{n\times n}$.

The discrete OT is then the following finite-dimensional LP 
\begin{equation}
\label{Discrete OT}
	\min \left\{\sum\limits_{i, j=1}^n p_{i, j}c_{i, j} :\ 
	p \in \bR_+^{n\times n},\  
	p \mathbbm{1}_n = \alpha,\
	p^T \mathbbm{1}_n = \beta \right\},
\end{equation}
\noindent where $\mathbbm{1}_n \in \bR^{n\times 1}$ denotes the column  vector with all entries equal to one, $\alpha=\pp{\alpha_1,...,\alpha_n}^T$ and $\beta=\pp{\beta_1,...,\beta_n}^T$. At this stage, computational methods for LPs can be applied to solve (\ref{Discrete OT}), see \cite{PeyreGabriel2019}.
Among these methods, we quote the well known network simplex \cite{Ahuja1993}, hungarian \cite{Kuhn} and the auction \cite{Bertsekas1981} algorithms. As the  above methods have complexity (at least) $O(n^3)$, this leaves the door open to faster and more efficient numerical methods.  
For example, a well-known technique to tackle with optimisation problems, it is  to add a regularisation term to the original objective function in order to obtain an approximating version of the problem which has an improved general structure and it is relatively easier to solve. In the setting of OT, \cite{Benamou2015} (see also \cite{Cuturi2013} and \cite{PeyreGabriel2019}) proposed an entropic regularisation which they showed to be solved by a fast and simple numerical scheme, namely the iterative Bregman algorithm \cite{Bregman1967}.
The very same regularisation can also be applied to the MOT problem although the algorithm can not be obtained in closed form (due to the additional martingale constraint). Indeed, \cite{DeMarch2018} proposed an extension of the iterative Bregman algorithm based on the \emph{dual} formulation of the problem in a general multidimensional setting.
We also mention that there are several other unrelated numerical methods to solve (\ref{Kantorovich problem}) \cite{MerigotThibert2021} which do not rely on the discretisation of $\mu, \nu$.

\subsection{Discretisation of constrained OT}
\label{Section: cOT}
Let's now move on to the constrained OT problem framework. Consider the problem (\ref{eq: cOT}), where $\Pi_{\text{c}}\pp{\mu, \nu}$ is the set of  transports $\pi\in \Pi_{\text{c}}\pp{\mu, \nu}$ from $\mu$ to $\nu$ which additionally satisfy and some linear (or convex) constraints. Mimicking the discrete method from the classical OT framework, we are interested in approximating $P_{\text{c}}\pp{\mu, \nu}$ by $P_{\text{c}}(\mu_n,\nu_n)$, where $\pp{\mu_n}_{n\in\bN}$, $\pp{\nu_n}_{n\in\bN}$ are sequences of finitely-supported measures converging to $\mu$, $\nu$ respectively. 

We are interested in whether it is possible to somehow adapt the discretisation method used in OT to the more general setting of cOT, and in particular in the following questions:
\begin{enumerate}[label=\textbf{Q.\arabic*}]
\item
If $(\mu, \nu)$ is viable, does there exist a sequence $\pp{\mu_n, \nu_n}_n$ of  finitely-supported probabilities such that $\pp{\mu_n, \nu_n}_n$ converges to $(\mu, \nu)$ (in some appropriate sense, e.g., in the weak topology)?
	\label{Q1}
\item
Can $\pp{\mu_n, \nu_n}$ be explicitly computed? How?
\label{Q2}
\item
Given $\pp{\mu_n, \nu_n} \to (\mu, \nu)$ as in \ref{Q1}, does $P_{\text{c}}\pp{\mu_n, \nu_n} \to  P_{\text{c}}\pp{\mu, \nu}$? 
\label{Q3}
\end{enumerate} 

In the following sections, we provide some possible answers to the questions above, in the setting of the MOT problem in arbitrary dimension. We point out that in this case the answers are quite delicate, since the map $\pp{\mu, \nu}\longmapsto  P_{\text{c}}(\mu, \nu)$ is \emph{not} continuous (unless $B=\bR$) \cite{BrJu22}, and it can happen that $\Pi_{\text{c}}\pp{\mu, \nu} \neq \emptyset$ whereas $\Pi_{\text{c}}\pp{\mu_n, \nu_n} = \emptyset$ for some $n$, for some sequence $\pp{\mu_n, \nu_n} $ converging to $\pp{\mu, \nu}$ \cite[Theorem 2.1]{Baker2015}.

\section{Discretisations preserving the convex order}
\label{Section: discretisations preserving the convex order}

In the setting of MOT, Theorem \ref{th: Strassen} shows that question \ref{Q1} becomes:\ 
\textit{is it possible to approximate $\pp{\mu, \nu}$ with finitely-supported $\pp{\mu_n, \nu_n}$ while preserving the convex order?}
This question is highly non-trivial, and has already been considered by several authors, as discussed in the introduction. 
In the next subsection, we discuss in some detail Baker's approach, so that we are later able to show in Remark \ref{Remark: U vs martingale quantisation} that it is a special case of our approach; we also introduce and develop our new approach, providing an explicit construction, bounds on the speed of convergence, and even some (rather weak) stability result.

\subsection{The $\cU$\emph{-quantisation}}
\label{Section: Known discretisations}
In \cite{Baker2015}, Baker considered the case $B=\bR$, and showed that in general $\cM(\mu_n,\nu_n)= \emptyset$ when the approximating measure $\mu_n, \nu_n$ are defined as the $L^2$-quantiser of $\mu$ and $\nu$, even if $\cM(\mu, \nu) \neq \emptyset$. He then proposed an approximation, called $\cU$\emph{-quantisation}, which preserves the convex order. 
 Given $\mu, \nu \in \mathcal{P}_2\pp{\bR}$ such that $\mu \leqcx \nu$, let $F_\mu(x) = \mu\big((-\infty,x]\big)$ be the (cumulative) distribution function of $\mu$, and recall that the quantile  function of $\mu$ is the generalised inverse 
\begin{align*}
	F_\mu^{-1} &:[0,1] \longrightarrow [-\infty,\infty]\\
	&:p \longmapsto \inf \big\lbrace x \in \bR: p \leq F_\mu(x) \big\rbrace .
\end{align*}
Then, for $n\in\bN \setminus \{0\}$, the $\cU$-quantisation of $\mu$ of order $n$ is defined by
\begin{equation}
\label{U quantisation}
	\hat{\mu}_n  \coloneqq 
	\cU_n(\mu)  \coloneqq \frac{1}{n} \sum_{i=i}^n \delta_{x_i},
	\quad \text{where} \quad
	x_i\coloneqq
	n\int_{\frac{i-1}{n}}^{\frac{i}{n}}F^{-1}_\mu(u)du ,
\end{equation}
and it is a probability supported on $n$ points.
The $\cU$-quantisation operators $\cU_n:\cP_2\to \cP_2$ preserve the convex order and converge pointwise to the identity, meaning that 
\begin{enumerate}
	\item  $\mu \leqcx \nu$ implies $\cU_n(\mu) \leqcx \cU_n(\nu)$ for all $n$ \cite[Theorem 3.5]{Baker2015}.
	\item  $\cU_n(\mu) $   converge to  $\mu$ in\footnote{To be precise, \cite{Baker2015} only proves weak convergence; the stronger convergence in $\cW_1$ follows from our Remark \ref{Remark: U vs martingale quantisation} and Theorem \ref{th: main result on pms}.} 
	$\cW_1$  as $n\to \infty$  for all $\mu\in \cP_1(\bR)$ \cite[Theorem 3.6]{Baker2015}.
\end{enumerate} 
We point out the curious formal analogy between $\cU_n(\mu)$ in (\ref{U quantisation}) and the empirical measure $\frac{1}{n}\sum_{i=1}^n \delta_{X_i}$ considered in  \cite{Alfonsi2019IJTAF, Alfonsi2019}.

\subsection{The martingale quantisation}
\label{Section: The martingale quantisation}
Our proposed solution to \ref{Q1} for the MOT problem is summarised in the subsequent Theorem \ref{th: main result on pms}. We recall that $B$ is assumed to be a separable Banach space throughout the paper.

\begin{theorem}
\label{th: main result on pms} 
Let $p\in [1,\infty)$ and $\mu, \nu \in \cP_{p}\pp{B}$ such that $\mu \leqcx \nu$. Then, there exist finitely-supported probability measures $\mu_n, \nu_n\in \cP_p(B), n\in\bN$,  such that $\mu_n \leqcx \nu_n$ for all $n \in \bN$ and $\mu_n \to \mu, \nu_n \to \nu$ in $\cP_p$. Moreover, one can additionally obtain that $\mu_n \leqcx\mu$ and $\nu_n \leqcx \nu$ for all $n \in \bN$. 
\end{theorem}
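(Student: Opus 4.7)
My plan is to pass via a Bochner-martingale construction: apply Strassen's theorem to extract a martingale coupling of $\mu$ and $\nu$, quantise the marginals by conditioning on cells of refining partitions, and invoke the $L^p$-martingale convergence theorem for random variables valued in the separable Banach space $B$. By Theorem~\ref{th: Strassen}, since $\mu\leqcx\nu$ there exists $\pi\in\cM(\mu,\nu)$; realise it as the law of $(X,Y)$ with $X\sim\mu$, $Y\sim\nu$, and $\bE[Y\mid X]=X$. Since $B$ is separable, $\cB(B)$ is countably generated, so I can pick a single refining sequence of finite Borel partitions $\Pi^n=(P_i^n)_{i\le k_n}$ of $B$ (for instance, atoms of $\sigma(A_1,\dots,A_n)$ for a countable generating algebra $(A_k)_k$) such that $\bigvee_n\sigma(\Pi^n)=\cB(B)$. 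Set $\cG_n:=\sigma(\{X\in P_i^n\}_i)$ and $\cK_n:=\sigma(\{Y\in P_i^n\}_i)$, so that $\cG_n\uparrow\sigma(X)$ and $\cG_n\vee\cK_n\uparrow\sigma(X,Y)$.

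Define the Bochner conditional expectations
\[ X_n := \bE[X\mid\cG_n], \qquad Y_n := \bE[Y\mid\cG_n\vee\cK_n]. \]
Each takes the cell-barycentre as its value on the corresponding atom, hence is finitely-valued and belongs to $L^p(\bP;B)$ by Jensen applied to $\|\cdot\|^p$; set $\mu_n:=\cL(X_n)$ and $\nu_n:=\cL(Y_n)$, both finitely supported in $\cP_p(B)$. The tower property and the martingale property of $(X,Y)$ give
\[ \bE[Y_n\mid\cG_n] \;=\; \bE[Y\mid\cG_n] \;=\; \bE\bigl[\bE[Y\mid X]\,\big|\,\cG_n\bigr] \;=\; \bE[X\mid\cG_n] \;=\; X_n, \]
and since $X_n$ is $\cG_n$-measurable, a further tower step yields $\bE[Y_n\mid X_n]=X_n$; therefore $(X_n,Y_n)$ is a martingale and $\mu_n\leqcx\nu_n$. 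Jensen's inequality for Bochner conditional expectations, applied to any Lipschitz convex $f:B\to\bR$, gives $\int f\,d\mu_n=\bE f(X_n)\le\bE f(X)=\int f\,d\mu$ and analogously for $\nu$, proving $\mu_n\leqcx\mu$ and $\nu_n\leqcx\nu$.

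For the convergence $\mu_n\to\mu$ and $\nu_n\to\nu$ in $\cP_p$, I would invoke the $L^p$-martingale convergence theorem for Bochner-integrable random variables applied to the filtrations $(\cG_n)_n$ and $(\cG_n\vee\cK_n)_n$, obtaining $X_n\to X$ and $Y_n\to Y$ in $L^p(\bP;B)$; since the joint law of $(X_n,X)$ is a transport plan between $\mu_n$ and $\mu$ with $p$-cost $\bE\|X_n-X\|^p$, this immediately yields $\cW_p(\mu_n,\mu)\to 0$, and likewise for $\nu$. The main obstacle is this last step: the standard $L^p$-martingale convergence is usually stated for real-valued variables, and one needs the Banach-valued version, where separability of $B$ is the essential hypothesis (the argument proceeds by approximating $X,Y\in L^p(\bP;B)$ by simple functions valued in a countable dense set and reducing to the scalar case). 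Everything else — choosing suitable partitions, identifying cell-barycentres as atoms of $X_n,Y_n$, and the tower-property computation establishing the martingale relation — is routine once this framework is in place.
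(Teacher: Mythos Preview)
Your proposal is correct and follows essentially the same route as the paper: the paper also invokes Strassen's theorem to obtain $(X,Y)$ with $\bE[Y\mid X]=X$, defines $X_n=\bE[X\mid\cG_n]$ and $Y_n=\bE[Y\mid\cG_n\vee\cK_n]$ for refining finite partitions generating $\cB(B)$, uses the tower property to get $\bE[Y_n\mid X_n]=X_n$, and then applies the Banach-valued $L^p$-martingale convergence theorem (citing Pisier) together with the trivial bound $\cW_p(\mu_n,\mu)\le\|X_n-X\|_{L^p}$. The only cosmetic difference is that the paper allows two distinct partition sequences $\Pi_1^n,\Pi_2^n$ rather than a single one, and verifies $\bE[Y_n\mid\cG_n]=X_n$ via a $\pi$-system argument on atoms rather than the direct tower computation you give.
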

We will prove Theorem \ref{th: main result on pms} as a simple corollary of the following analogous  result  involving random variables. We now introduce some notations and facts
used without further notice throughout the paper.\\

\noindent Given $\Pi, \Pi_1,\Pi_2$ finite partitions of a set $E$, and $X, Y:\Omega \to E$, we write
$$
\hspace{-0.3 cm} 
\#\Pi\coloneqq \text{card}(\Pi), 
\quad 
X^{-1}(\Pi)\coloneqq \{X^{-1}(P):  P \in \Pi\}, 
\quad  
\Pi_1\times \Pi_2 \coloneqq 
\gp{M\times N,\ M \in \Pi_1,\ N \in \Pi_2} ,
$$ 
so that $\#\Pi$ is the number of elements of $\Pi$ and $W^{-1}(\Pi)$ is a finite partition of $\Omega$. Clearly $\Pi_1 \times \Pi_2$ is a finite partition of $E \times E$ and 
$$
(X, Y)^{-1}\pp{\Pi_1 \times \Pi_2} = 
\gp{X^{-1}(M) \cap Y^{-1}(M) : M \in \Pi_1,  N \in \Pi_2} .
$$
In particular, taking $\Pi_2$ to be the trivial partition $\{E\}$ we get 
$$
X^{-1}\pp{\Pi_1} = (X, Y)^{-1}\pp{\Pi_1 \times \{E\}} .
$$
If $E$ is endowed with a $\sigma$-algebra $\cE$, and $n\in \bN\setminus\{0\}$, we will consider the family 
\begin{align*}
	\gp{\Pi=(P_i)_{i=1}^k \subseteq \cE: k\leq n, P_i\cap P_j=\emptyset \,\, \forall i\neq j, \cup_{i=1}^k P_i=E}, 
\end{align*}
of ($\cE$-measurable) partitions of $E$ which are made of at most $n$ elements.
Clearly, if $X$ is a $E$-random variable, then $X^{-1}(\Pi)$ is a partition of $\Omega$ made of at most $n$ elements. We recall that if $\cG\subseteq \cF$ is a $\sigma$-algebra on $\Omega$,  $G\in \cG$ is called an \emph{atom of $\cG$} if for every $H\in \cG$ either $G\cap H=G$ or $G\cap H=\emptyset$, and that $\cG$  is finite if and only if it is of the form $\cG=\sigma(H)$ for some finite partition $H\subseteq \cF$,  and in this case $H$ is the set of atoms of $\cG$, and $\cG$ is the family of all possible unions of sets in $H$ \cite[Proposition I.2.1]{Ne65Ma}.
 
\begin{theorem}
\label{th: main result on rvs} 
For $i=1,2,\ n\in \bN$, let $\Pi^n_i\subseteq \cB(B)$ be finite partitions  of $B$ which satisfy\footnote{Of course such partitions exist, since $B$ is separable. For example, choose  countably many Borel sets $(A_n)_{n\in \bN}$ which generate $\cB(B)$ (e.g.~the balls of radius $1/(n+1), n\in \bN$, centred at points in a countable dense set), and then take   $\Pi^{n}_1=\Pi^{n}_2$ as the family of the atoms of $\sigma\pp{\pp{A_i}_{i\leq n}}$.} 
\begin{align}
\label{eq: condition on pi^n_i}
	\sigma\bigg( \Pi^{n}_i, n \in \bN \bigg) = \cB\pp{B} , 
	\quad 
	\Pi^{n}_i \subseteq \Pi^{n+1}_i 	
\end{align} 
Let $p\in [1,\infty)$ and given $X,Y\in L^p(\bP,B)$ such that $\bE[Y|X]=X$, consider the `martingale quantisation' $\pp{X_n,Y_n}$ of $(X,Y)$, defined by 
\begin{align}
\label{eq: Xn, Yn}
	X_n \coloneqq 	
	\bE\sq{X \big\vert \sigma^n_X}, 
	\quad 
	Y_n \coloneqq 	
	\bE\sq{Y \big\vert \sigma^n_{X, Y}} , 
\end{align}
where
\begin{align}
\label{eq: sigma^n_X, sigma^n_X,Y}
	\sigma^n_X\coloneqq 
	\sigma(X^{-1}\pp{\Pi^n_1}) , 
	\quad 
	\sigma^n_{X, Y} \coloneqq 
	\sigma(X^{-1}\pp{\Pi^n_1})\vee \sigma(Y^{-1}\pp{\Pi^n_2})
\end{align}
i.e.~$\sigma^n_X,\sigma^n_{X,Y}$ are the $\sigma$-algebras generated by the partitions 
$$  
X^{-1}\pp{\Pi^n_1} =
(X, Y)^{-1}\pp{\Pi^n_1 \times \{B\}} , 
\quad 
(X, Y)^{-1}\pp{\Pi^n_1 \times \Pi^n_2} .
$$
Then $X_n$ (resp.~$Y_n$) is supported on at most $\#\Pi_1^n$ (resp.~$\#\Pi_1^n \cdot \#\Pi_2^n$) points, and
\begin{subequations}
\begin{align}
	\bE\sq{Y_n \vert \sigma^n_X} =\bE[Y_n|X_n]= 
	X_n , 
	\quad 
	X_n = E[X|X_n], 
	\quad 
	Y_n = E[Y|Y_n] ,
\\
\label{eq: conv Lp and ae}
 	X_n   \to X \text{ a.s.~and in } L^p, \quad 
	Y_n  \to Y \text{ a.s.~and in } L^p .
\end{align}
\end{subequations}
\end{theorem}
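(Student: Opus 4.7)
I would first dispatch the easy structural observations: $X_n$ is $\sigma^n_X$-measurable, and $\sigma^n_X$ is generated by the finite partition $X^{-1}(\Pi^n_1)$ of $\Omega$, which has at most $\#\Pi^n_1$ atoms; any $\sigma^n_X$-measurable map is constant on atoms, so $X_n$ takes at most $\#\Pi^n_1$ distinct values. The same argument applied to $\sigma^n_{X,Y}$, generated by the product partition $(X,Y)^{-1}(\Pi^n_1\times \Pi^n_2)$ of cardinality at most $\#\Pi^n_1\cdot \#\Pi^n_2$, handles the support bound for $Y_n$.

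For the five conditional-expectation identities, I would exploit the nesting $\sigma(X_n)\subseteq \sigma^n_X\subseteq \sigma^n_{X,Y}$ together with $\sigma(Y_n)\subseteq \sigma^n_{X,Y}$, and reduce every identity to the tower property applied to $\bE[Y|X]=X$. Explicitly, since $\sigma^n_X\subseteq \sigma(X)$, one has $\bE[Y|\sigma^n_X]=\bE[\bE[Y|X]|\sigma^n_X]=\bE[X|\sigma^n_X]=X_n$. Towering across $\sigma^n_X\subseteq \sigma^n_{X,Y}$ then yields $\bE[Y_n|\sigma^n_X]=\bE[Y|\sigma^n_X]=X_n$, and towering once more across $\sigma(X_n)\subseteq \sigma^n_X$ gives $\bE[Y_n|X_n]=X_n$ and $\bE[X|X_n]=X_n$. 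Symmetrically, $\bE[Y|Y_n]=\bE[\bE[Y|\sigma^n_{X,Y}]|Y_n]=Y_n$, using $\sigma(Y_n)\subseteq \sigma^n_{X,Y}$.

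For the convergence, I would invoke Doob's martingale convergence theorem for $B$-valued closed martingales, which holds under separability of $B$ via Bochner integration (see e.g.~\cite[Section 5.3]{Str10}). The refining hypothesis $\Pi^n_i\subseteq \Pi^{n+1}_i$ (read as nesting of the generated $\sigma$-algebras, the natural meaning for partitions) makes $(\sigma^n_X)_n$ and $(\sigma^n_{X,Y})_n$ increasing filtrations. The generating hypothesis $\sigma(\Pi^n_i, n\in\bN)=\cB(B)$, pulled back along $X$, gives $\vee_n \sigma^n_X=\sigma(X)$, and an analogous computation gives $\vee_n \sigma^n_{X,Y}=\sigma(X)\vee \sigma(Y)=\sigma(X,Y)$. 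Doob then delivers $X_n\to \bE[X|\sigma(X)]=X$ and $Y_n\to \bE[Y|\sigma(X,Y)]=Y$ both almost surely and in $L^p$.

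The only non-routine ingredient is the Banach-valued martingale convergence theorem, and even this is standard under separability of $B$; everything else reduces to finite-partition bookkeeping and three-line tower computations. The conceptual point worth highlighting is that defining $Y_n$ via $\sigma^n_{X,Y}$ rather than via $\sigma(Y^{-1}(\Pi^n_2))$ alone is precisely what preserves the martingale relation between $Y_n$ and $X_n$; this is the single observation that makes the whole construction work.
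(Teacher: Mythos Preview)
Your proof is correct and follows essentially the same approach as the paper: finite-partition support bounds, tower-property manipulations for the conditional-expectation identities, and Doob's $L^p$ martingale convergence theorem for Banach-valued closed martingales. Your derivation of $\bE[Y_n\mid\sigma^n_X]=X_n$ via direct towering through $\sigma^n_X\subseteq\sigma(X)$ and $\sigma^n_X\subseteq\sigma^n_{X,Y}$ is marginally more streamlined than the paper's atom-by-atom verification on the $\pi$-system $X^{-1}(\Pi^n_1)$, but the underlying idea is identical.
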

\begin{proof}
If a $\sigma$-algebra $\cG$ is generated by a partition made of $n$ sets and  $Z:\Omega \longrightarrow B$ is $\cG$-measurable, then $Z$ only takes at most $n$  values, since it is constant on every atom of $\cG$; thus, $X_n$ (resp.~$Y_n$) is supported on at most $\#\Pi_1^n$ (resp.~$\#\Pi_1^n \cdot \#\Pi_2^n$) points. 

The assumptions (\ref{eq: condition on pi^n_i}) imply that 
$(\sigma^n_X)_n,(\sigma^n_{X,Y})_n$ are filtrations on $\pp{\Omega, \cF}$ such that $\vee_n \sigma^n_X=\sigma(X),\ \vee_n \sigma^n_{X,Y}=\sigma(X,Y)$, and so 
\begin{align}
\label{eq: closure is limit}
\bE\sq{X\vert \vee_n \sigma^n_X  }=X, \quad \bE\sq{Y\big\vert \vee_n \sigma^n_{X, Y}}=\bE\sq{Y\vert X, Y}=Y.
\end{align}
By definition $\pp{X_n}_{n}$ (resp.~$\pp{Y_n}_{n}$) is a martingale with respect to the filtration $\pp{\sigma^n_X}_{n}$ (resp.~$(\sigma^n_{X,Y})_n$)  and is closed by $X$ (resp.~$Y$). 
Thus, (\ref{eq: closure is limit}) and the martingale convergence theorems \cite[Theorems 1.5, 1.14]{Pis16}  yield (\ref{eq: conv Lp and ae}).
The tower property  gives
\begin{equation}
	X_n =\bE\sq{\bE\sq{X|\sigma^n_X} \vert X_n} = 
	\bE\sq{X\vert X_n} , \quad 
	Y_n = 
	\bE\sq{\bE\sq{Y\vert\sigma^n_{X, Y}} \vert Y^n} = 
	\bE\sq{Y\vert Y_n} .
\end{equation}
Let us now prove $\bE\sq{Y_n \vert \sigma^n_X} =\bE[Y_n|X_n]=X_n$.
Let $\{A^n_i\}_{i}=X^{-1}(\Pi_1^n)$ be the family of atoms of $\sigma^n_{X}$. Then, $A^n_i\in \sigma^n_{X} \subseteq \sigma^n_{X, Y} \cap \sigma(X)$ and so
\begin{align*}
\textstyle 
	\bE\sq{Y_n \mathbbm{1}_{\{X\in A_i\}}} = 
	\bE\sq{Y \mathbbm{1}_{{\{X\in A_i\}}}} = 
	\bE\sq{X \mathbbm{1}_{{\{X\in A_i\}}}} = 
	\bE\sq{X_n \mathbbm{1}_{{\{X\in A_i\}}}} .
\end{align*} 
In other words, the measures $\bE\sq{Y_n \mathbbm{1}_{\gp{X\in \cdot }} }$ and $\bE\sq{X_n \mathbbm{1}_{\gp{X\in \cdot }} } $ coincide on the $\pi$-system\footnote{Trivially $\{\emptyset, A^n_i\}_{i}$ is closed under intersections since the atoms are all disjoint.} $\gp{\emptyset, A^n_i}_{i}$ which generates $\sigma^n_X$, and so they coincide on $\sigma^n_X$, which means  that $\bE\sq{Y_n \vert \sigma^n_X} = X_n$. It follows that $\sigma(X_n)\subseteq \sigma_X^n$, and so taking $\bE[\cdot |X_n]$, the tower property  gives
$$X_n=\bE[X_n|X_n]=\bE\sq{\bE\sq{Y_n\vert \sigma_X^n}\vert X_n}=\bE\sq{Y_n\vert X_n} .$$
\end{proof}

\begin{lemma}
\label{le: W_p bounded by L^p}
If $X, Y$  are $B$-valued random variables with laws $\mu, \nu\in \cP_p$ then 
$$
\cW_p\pp{\mu, \nu}  \leq \norm{X-Y}_{L^p\pp{\bP,B}} .
$$
\end{lemma}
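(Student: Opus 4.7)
The plan is straightforward: use the joint law of $(X,Y)$ as an admissible coupling in the Kantorovich formulation of the Wasserstein distance.

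First I would let $\pi$ denote the joint law of $(X,Y)$ on $B \times B$. Since $X$ has law $\mu$ and $Y$ has law $\nu$, the marginals of $\pi$ are $\mu$ and $\nu$ respectively, so $\pi \in \Pi(\mu,\nu)$. The metric on $B$ is the one induced by the norm, i.e.\ $d(x,y) = \|x-y\|_B$.

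Then, by the definition of $\cW_p$ given in (\ref{Wasserstein distance}), taking the infimum over all transport plans in $\Pi(\mu,\nu)$ is bounded above by the value obtained on the specific plan $\pi$, which by the transfer theorem equals
$$
\cW_p(\mu,\nu)^p \leq \int_{B\times B} \|x-y\|_B^p\, d\pi(x,y) = \bE\sq{\|X-Y\|_B^p} = \norm{X-Y}_{L^p(\bP,B)}^p .
$$
Taking $p$-th roots yields the claim.

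There is essentially no obstacle; the only thing to verify is that $\pi\in \cP_p(B\times B)$ so that the integral is well-defined and finite, which follows immediately from $X,Y \in L^p(\bP,B)$ (equivalently $\mu,\nu\in \cP_p(B)$) together with the triangle inequality $\|x-y\|_B \leq \|x-x_0\|_B + \|y-x_0\|_B$. The proof is essentially a one-liner once one recognises that $\text{Law}(X,Y)$ is a coupling of $\mu$ and $\nu$.
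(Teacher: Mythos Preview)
Your proof is correct and takes essentially the same approach as the paper: both use the joint law of $(X,Y)$ as an admissible coupling in the definition of $\cW_p$, yielding the inequality immediately. Your version is slightly more explicit (noting the marginals, the $p$-th root, and the finiteness of the integral), but the underlying idea is identical.
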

\begin{proof}
If $(X, Y)$ have joint law $\pi$ then 
$$
\norm{X-Y}^p_{L^p\pp{\bP,B}}=\int \abs{x- y}^p d \pi(x, y) ,
$$
so the thesis follows trivially from the definition of $\cW_p(\mu, \nu)$.
\end{proof} 

\begin{proof}[Proof of Theorem \ref{th: main result on pms}]
By Theorem \ref{th: Strassen} there exist $X, Y\in L^p$ such that $\bE\sq{Y\vert X}=X$. Applying Theorem \ref{th: main result on rvs} to such $X, Y$ yields some $X_n, Y_n$, whose laws $\mu_n, \nu_n$ satisfy the desired properties by Lemma \ref{le: W_p bounded by L^p} and Theorem \ref{th: Strassen}. 
\end{proof}

\begin{remark}
\label{re: proofs are constructive}
Importantly, the proofs of Theorems \ref{th: main result on pms} and \ref{th: main result on rvs} provide an \emph{algorithm}  to construct the discrete random variables $X_n, Y_n$ which satisfy the martingale condition, and thus the discretisation $\pp{\mu_n,\nu_n}$ of $\pp{\mu, \nu}$ which preserves the convex order, thus providing an answer to both question \ref{Q1} and \ref{Q2}. 
\end{remark} 

\begin{remark}
\label{re: countable partitions}
The previous construction can be applied without any change to countable partitions of $B$, in which case one obtains sequences of random variables $X_n,Y_n$ and measures $\mu_n,\nu_n$, each of which is countably supported. 
\end{remark}

Given the above results, we can provide the following definition which summarises the proposed answer to \ref{Q1} in the setting of MOT.

\begin{definition}
\label{Definition: martingale quantisation on marginals}
We call the sequence of probability measures $\pp{\mu_n, \nu_n}_{n\in\bN}$ (or, equivalently, of random variables $\pp{X_n, Y_n}_{n\in\bN}$) a \emph{martingale quantisation} of the couple $\pp{\mu, \nu}$ (resp.~$\pp{X, Y}$).
\end{definition}

\begin{remark}
\label{Remark: Y_n depends on both X and Y}	
	It is important to underline that $X_n$ only depends on the target random variable $X$; on the other hand, the quantised random variable $Y_n$ depends both on $X$ and $Y$. This dependence seems to be necessary to ensure that $X_n$ and $Y_n$ satisfy the martingale condition in this general setting.
\end{remark}

\begin{remark}
\label{Remark: U vs martingale quantisation}
There is a close link between the $\cU$-quantisation, and the martingale quantisation. Indeed, given a probability $\mu$ on $\bR$, its quantile function $X\coloneqq F^{-1}_\mu$, seen as a measurable function on the probability space $\Omega\coloneqq(0,1)$ endowed with the Lebesgue measure $\cL^1=\bP$ on the Borel $\sigma$-algebra $\cB((0,1))$, is a random variable with law $\mu$. Thus, if one considers the partition $\Pi^n$ of $\bR$ given by the intervals  
$$
\Bigg(X\pp{0^+},X\pp{\frac{1}{n}}\Bigg], \
\Bigg(X\pp{\frac{i-1}{n}},X\pp{\frac{i}{n}}\Bigg]
\text{ for } i=1, \ldots, n-1 , \,  
\Bigg(X\pp{\frac{n-1}{n}},X(1^-)\Bigg) ,
$$
then the $\sigma$-algebra $X^{-1}\pp{\sigma\pp{\Pi^{n}_1}} $ appearing in (\ref{eq: sigma^n_X, sigma^n_X,Y}) is  the one generated by the partition 
\begin{align}
\label{eq: partition [0,1]}
	\Bigg(0,\frac{1}{n}\Bigg], 
	\Bigg(\frac{1}{n}, \frac{2}{n}\Bigg], \ldots,
	\Bigg(\frac{n-1}{n}, 1\Bigg)
\end{align}
of $\Omega=(0,1)$, and thus $X_n \coloneqq \bE\sq{X\vert\sigma^n_X} $ as defined in (\ref{eq: Xn, Yn}) takes the value $\int_{\frac{i-1}{n}}^{\frac{i}{n}}F^{-1}_\mu(u)du$ on the interval $\left(\frac{i-1}{n}, \frac{i}{n}\right] \cap (0,1)$, and so the law of $X_n$ is the probability $\cU_n(\mu)$ defined in (\ref{U quantisation}). Thus, the $\cU$-quantisation is a special case of the martingale quantisation, in which  one chooses $X$ as being the quantile function of $\mu$, and  $X^{-1}\pp{\sigma\pp{\Pi^{n}_1}}$ as being the finite $\sigma$-algebra whose atoms are listed in (\ref{eq: partition [0,1]}), so that in particular all the weights $\alpha_i$ in the representation $\mu = \sum_{i=1}^n \alpha_i\delta_{x_i}$ are equal to $\frac{1}{n}$. However, notice that the martingale quantisation $Y_n$ of $Y\sim \nu$ involves also $X$, and so its law is unrelated to $\cU_n(\nu)$; moreover, given $\mu \leqcx \nu$, while the quantile functions $X\coloneqq F^{-1}_\mu$ and $Y\coloneqq F^{-1}_\nu$ have laws $\mu$ and $\nu$, they do not\footnote{Indeed,  if $\mu$ has no atoms then $F_\mu$ has no jumps, so $F^{-1}_\mu$ is \emph{strictly} increasing, and so $\sigma(F^{-1}_\mu)$  equals the Borel $\sigma$-algebra $\cB((0,1))$. Thus $F^{-1}_\nu$ is  $\sigma(F^{-1}_\mu)$-measurable, and so $(F^{-1}_\mu, F^{-1}_\nu)$ is martingale  only if $F^{-1}_\mu=F^{-1}_\nu$, i.e.~only if $\mu=\nu$.} 
in general form a martingale (on $(0,1)$ endowed with $\cL^1$). 
\end{remark}

\subsection{Explicit representation of the martingale quantisation}
\label{Section: martingale quantisation}
In this section, we will provide an explicit integral expression for the martingale quantisation, i.e.~for the random variables $X_n, Y_n$ which appear in Theorem \ref{th: main result on rvs}, and their laws  $\mu_n, \nu_n$ which appear in Theorem \ref{th: main result on pms}, thus positively answering question \ref{Q2}. 

\begin{proposition}
\label{Proposition: explicit representation for martingale quantisation rvs} 
Let $X \in L^p(\bP,B),\ X\sim \mu$ and $\hat{X} \coloneqq \bE\sq{X \vert \hat{\sigma}_X}$ where $\Pi_1=(P_{1,i})_i$ is a finite partition  of $B$ and $\hat{\sigma}_X \coloneqq 
\sigma(X^{-1}\pp{\Pi_1})$.
Then,
\begin{align}
\label{eq: quan bar expl repr X hat}
	\hat{X} &=  
	\sum_{i} x_i\mathbbm{1}_{P_{1,i}} ,
\end{align}
so that its law is given by 
\begin{align}
\label{eq: meas quan bar expl repr mu hat}
	\hat{\mu} &= 
	\cL\pp{\hat{X}} = 
	\sum_{i} \mu\pp{P_{1,i}} \delta_{x_i} ,
\end{align}
where the sum is over $i$ such that $\mu\pp{P_{1,i}}>0$ and $x_i$ can be expressed as 
\label{eq: barycenters}
\begin{align}
\label{eq: barycenters xi}
	x_i & \coloneqq 
	\frac{1}{\pi\pp{P_{1,i}\times B}}\int_{P_{1,i} \times B}x\,\pi\pp{dx, dy} = \frac{1}{\mu\pp{P_{1,i}}}\int_{P_{1,i}}x\,\mu\pp{dx} ,
\end{align} 
so that $x_i$ is the barycentre of the probability $\mu_i \coloneqq \frac{\mathbbm{1}_{P_{1,i}}}{\mu\pp{P_{1,i}}}\cdot \mu$.

\noindent Assume moreover that $Y \in L^p(\bP,B),\ Y\sim \nu$ satisfies $\bE[Y|X]=X$, and define $\hat{Y}\coloneqq \bE\sq{Y \vert \hat{\sigma}_{X,Y}}$ where  $\Pi_1=(P_{1,i})_i,\  \Pi_2=(P_{2,j})_j$ are finite partitions  of $B$ and $$\hat{\sigma}_{X, Y} \coloneqq \sigma\pp{X^{-1}\pp{\Pi_1}} \vee \sigma\pp{Y^{-1}\pp{\Pi_2}}.$$
Then,
\begin{align}
\label{eq: quan bar expl repr Y hat}
	\hat{Y} &= 
	\sum_{i, j} y_{i, j}\mathbbm{1}_{P_{1,i} \times P_{2,j}} ,
\end{align}
so that its law is given by 
\begin{align}
\label{eq: meas quan bar expl repr nu hat}
	\hat{\nu}  &=  
	\cL\pp{\hat{Y}} = 
	\sum_{i, j} \pi\pp{P_{1,i} \times P_{2,j}} \delta_{y_{i, j}} ,
\end{align}
where $\pi =\cL\pp{X, Y}$ and the sum is over $i,j$ such that $\pi\pp{P_{1,i}\times P_{2,j}}>0$. In addition, $y_{i,j}$ can be expressed as
\begin{align}
\label{eq: barycenters yij}
	y_{i, j} &  \coloneqq 
	\frac{1}{\pi\pp{P_{1,i}\times P_{2,j}}}\int_{P_{1,i} \times P_{2,j} }y\,\pi\pp{dx, dy} =
	\int_{B} y\,\nu_{i, j}\pp{dy} ,
\end{align} 
so that $y_{i,j}$ is the barycentre of the law $\nu_{i,j} \coloneqq \pi_{i,j} \circ P^{-1}_y$ of $P_y$ under $\pi_{i,j}$, where
\begin{equation}
	\pi_{i,j} \coloneqq \frac{\mathbbm{1}_{P_{1,i}\times P_{2,j}}}{\pi\pp{P_{1,i}\times P_{2,j}}} \cdot \pi
\end{equation}
In particular, the  $X_n, Y_n$ which appear in Theorem \ref{th: main result on rvs}  admit the  explicit representation given by eqs. (\ref{eq: quan bar expl repr X hat}) (\ref{eq: barycenters xi}) (\ref{eq: quan bar expl repr Y hat}) and (\ref{eq: barycenters yij}), in which the quantities $P_{1,i},  P_{2,j}, x_i, y_{i,j}$ also depend on the index $n$.
\end{proposition}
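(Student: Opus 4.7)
The plan is to exploit the fact that $\hat\sigma_X$ and $\hat\sigma_{X,Y}$ are generated by finite partitions, so the Bochner conditional expectations $\hat X$ and $\hat Y$ reduce to piecewise-constant ``averages'' over atoms, for which the constant on each atom is an explicit integral. Throughout I will use that the conditional expectation of a Bochner-integrable $B$-valued random variable is characterised by the defining identity $\bE[\hat X\, \mathbbm{1}_G] = \bE[X\, \mathbbm{1}_G]$ for all $G$ in the conditioning $\sigma$-algebra, together with $\hat X$ being measurable with respect to it (see \cite[Section 5.3]{Str10}).

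First I would identify the atoms of $\hat\sigma_X$: these are precisely the sets $A_i \coloneqq X^{-1}(P_{1,i})$ with $\bP(A_i)>0$, equivalently $\mu(P_{1,i})>0$, by the general fact (\cite[Proposition I.2.1]{Ne65Ma}) that a $\sigma$-algebra generated by a finite partition has the partition (modulo null sets) as its atoms. Any $\hat\sigma_X$-measurable $B$-valued function must be constant on each atom, so I can write $\hat X = \sum_i x_i \mathbbm{1}_{A_i}$ for some $x_i \in B$, which (identifying $\mathbbm{1}_{A_i}(\omega) = \mathbbm{1}_{P_{1,i}}(X(\omega))$) gives (\ref{eq: quan bar expl repr X hat}). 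To pin down $x_i$ I would plug $G=A_i$ into the defining identity: $x_i \bP(A_i) = \bE[X \mathbbm{1}_{A_i}]$, and then apply the change-of-variables formula $X_\# \bP = \mu$, and the marginal condition $\pi(\cdot \times B) = \mu$, to rewrite this as (\ref{eq: barycenters xi}), thus identifying $x_i$ as the barycentre of $\mu_i$. The law of $\hat X$ in (\ref{eq: meas quan bar expl repr mu hat}) is then immediate, since $\hat X$ takes the value $x_i$ on the set $A_i$ of probability $\mu(P_{1,i})$.

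The argument for $\hat Y$ is structurally identical, with the partition $\Pi_1 \times \Pi_2$ in place of $\Pi_1$. The atoms of $\hat\sigma_{X,Y}$ are the sets $(X,Y)^{-1}(P_{1,i} \times P_{2,j})$ with $\pi(P_{1,i}\times P_{2,j}) > 0$; measurability of $\hat Y$ with respect to $\hat\sigma_{X,Y}$ forces $\hat Y = \sum_{i,j} y_{i,j} \mathbbm{1}_{P_{1,i} \times P_{2,j}}$ (with the same identification of indicator functions as above); and testing the defining identity against each atom gives $y_{i,j} \pi(P_{1,i}\times P_{2,j}) = \bE[Y \mathbbm{1}_{\{(X,Y) \in P_{1,i}\times P_{2,j}\}}]$, whence (\ref{eq: barycenters yij}) after rewriting via $(X,Y)_\# \bP = \pi$ and recognising $y_{i,j}$ as the barycentre of the conditional law $\nu_{i,j}$ of $Y$ given the event $\{(X,Y) \in P_{1,i}\times P_{2,j}\}$.

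I do not expect any serious obstacle: everything is essentially the elementary ``conditional expectation is an atom-wise average'' lemma, applied in the Bochner-integrable $B$-valued setting. The only point that needs a line of care is the Bochner-valued version of this lemma, which is standard: for simple functions the identity $\bE[X \mid \hat\sigma_X] = \sum_i (\bE[X\mathbbm{1}_{A_i}]/\bP(A_i)) \mathbbm{1}_{A_i}$ is immediate, and one extends to arbitrary $X \in L^p(\bP,B)$ by $L^1$-continuity of conditional expectation. The last sentence of the statement, about $X_n, Y_n$ from Theorem \ref{th: main result on rvs}, is then just the special case where $(\Pi_1,\Pi_2)$ is replaced by $(\Pi^n_1, \Pi^n_2)$.
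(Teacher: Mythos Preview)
Your proposal is correct and follows essentially the same approach as the paper: both reduce to the elementary fact that the conditional expectation with respect to a finite $\sigma$-algebra is the atom-wise average, and both then read off the law. The only structural difference is that the paper packages this atom-wise averaging as a standalone lemma (applied once with $W=G=X$ and once with $W=(X,Y),\,G=Y,\,\Pi=\Pi_1\times\Pi_2$), whereas you carry out the two instances inline; this is a matter of presentation, not substance.
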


\begin{remark}
Notice that the above expressions  for $\hat{\mu}, \hat{\nu}$ do not depend on the random variables $X, Y$, they only depend on their joint law $\pi\in\cM\pp{\mu, \nu}$. However, such law is not an input of the MOT problem (only $\mu, \nu,c$ are), so it not always known in advance; when no such martingale coupling between $\mu$ and $\nu$ is known, one cannot apply the previous formulas to explicitly compute the laws $\mu_n,\nu_n$. As discussed in the introduction, this is a disadvantage of our algorithm, since currently algorithms which output a $\pi\in\cM\pp{\mu, \nu}$ given $\mu \leqcx \nu$ are only known in dimension 1.
\end{remark}

To prove the above proposition, we will need the following simple lemma. 
 
\begin{lemma}
\label{le: law of Z}
If $(E,\cE)$ is a measurable space,  $\Pi=\{P_j\}_j \subseteq \cE$ a finite partition of $E$, $W$ a $E$-valued random variable, $G\in L^1(\bP,B)$, and $\theta=\cL(W,G)$ then 
$$\bE[G|\sigma(W^{-1}(\Pi))]=\sum_{j: \bP(W\in P_i)>0} 1_{P_j}(W)\frac{\bE[1_{P_j}(W)G]}{\bP(W\in P_i)}$$
has   law $\zeta$  given by the formula
$$ 	\zeta  = \sum_{j} \theta(P_j\times B) \delta_{z_j}, \quad \text{ with } z_j\coloneqq\frac{1}{\theta(P_j\times B)}\int_{P_j \times B} g \,d\theta(e,g) .
$$
 \end{lemma}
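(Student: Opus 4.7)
The plan is to exploit the fact that the conditioning $\sigma$-algebra $\sigma(W^{-1}(\Pi))$ is finite, generated by the partition $\{A_j\}_j$ of $\Omega$ with $A_j\coloneqq W^{-1}(P_j)$, so that the conditional expectation is constant on each atom and equal to the average of $G$ there (in the Bochner sense, since $G$ takes values in $B$). The rest is a matter of rewriting these averages as integrals against the joint law $\theta$ and reading off the law.

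First, I would recall that for a $\sigma$-algebra $\mathcal{A}$ generated by a finite partition $\{A_j\}_j$ of $\Omega$ and any $G\in L^1(\mathbb{P},B)$, the conditional expectation has the explicit form
$$
\mathbb{E}[G\mid \mathcal{A}] = \sum_{j:\,\mathbb{P}(A_j)>0} 1_{A_j}\,\frac{\mathbb{E}[1_{A_j}G]}{\mathbb{P}(A_j)},
$$
which is a standard fact and carries over verbatim to Bochner integrals (the right-hand side is $\mathcal{A}$-measurable, integrable, and satisfies the defining averaging property by construction on each $A_j$). Applied with $A_j=W^{-1}(P_j)$ this gives exactly the stated formula for $Z=\mathbb{E}[G\mid\sigma(W^{-1}(\Pi))]$, once we note that $1_{A_j}=1_{P_j}\circ W=1_{P_j}(W)$.

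Second, I would translate the scalar weights and vector-valued averages in terms of $\theta=\mathcal{L}(W,G)$. By the push-forward/change-of-variable formula,
$$
\mathbb{P}(A_j)=\mathbb{P}(W\in P_j)=\theta(P_j\times B),
\qquad
\mathbb{E}\bigl[1_{P_j}(W)\,G\bigr]=\int_{P_j\times B} g\,d\theta(e,g),
$$
where the second identity is the Bochner-integral version of the push-forward formula, which holds since $(e,g)\mapsto 1_{P_j}(e)g$ is Bochner-integrable under $\theta$. Consequently, on the atom $A_j$ (when $\mathbb{P}(A_j)>0$) the random variable $Z$ takes the constant value $z_j$ as defined in the statement.

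Finally, the law of $Z$ is immediate: $Z$ takes the value $z_j$ on the set $A_j$ of probability $\theta(P_j\times B)$, and the $A_j$'s form a partition of $\Omega$, so
$$
\zeta=\mathcal{L}(Z)=\sum_{j:\,\theta(P_j\times B)>0}\theta(P_j\times B)\,\delta_{z_j},
$$
which, with the convention that the summand vanishes when $\theta(P_j\times B)=0$, matches the displayed expression. The only mildly delicate point is checking that the averaging property of conditional expectation and the change-of-variable formula are valid for $B$-valued integrands; both follow directly from the construction of Bochner conditional expectation (e.g.\ \cite[Section 5.3]{Str10}), so I do not expect any real obstacle.
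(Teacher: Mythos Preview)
Your proof is correct and follows essentially the same approach as the paper: both obtain the explicit atomwise formula for the conditional expectation with respect to a finite $\sigma$-algebra, then read off the law of the resulting simple random variable. Your version is simply more detailed, making explicit the push-forward identification with $\theta$ that the paper leaves implicit.
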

\begin{proof}
The formula for $\bE[G|\sigma(W^{-1}(\Pi))]$ is simply the definition of conditional expectation with respect to the $\sigma$-algebra generated by the partition $W^{-1}(\Pi)$. 
That $\zeta$ are as stated follows from
 the fact that the random variable $\sum_i 1_{H_i} w_i$ has law $\sum_i \bP(H_i)\delta_{w_i}$.
\end{proof} 

\begin{proof}[Proof of Proposition \ref{Proposition: explicit representation for martingale quantisation rvs}]
Applying Lemma \ref{le: law of Z} with  $G=W\sim \mu$, we get that $d\theta(e,g)=d\mu(e) d \delta_{e}(g)$, and so $\hat{\mu}$ is given by eqs. (\ref{eq: meas quan bar expl repr mu hat}) and (\ref{eq: barycenters xi}).
Applying instead Lemma \ref{le: law of Z} with
$$E\coloneqq  B \times B, \quad \Pi\coloneqq \Pi_1 \times  \Pi_2, \quad  W=(X,Y)\sim \pi ,	
$$ 
writing $e\in E$ as $e=(x,y)$ and using $\theta(\cdot \times B) =\pi$ gives
$$ 
\hat{\nu}  = \sum_{i,j} \pi(P_{i,i}\times P_{2,j}) \delta_{y_{i,j}}, \quad \text{ with } y_{i,j}\coloneqq\frac{1}{\pi(P_{i,i}\times P_{2,j}) }\int_{P_{i,i}\times P_{2,j} \times B} g \,d\theta(x,y,g) .
$$
If then one  takes $G=Y$, it follows that $d\theta(x,y,g)=d\pi(x,y) d \delta_{y}(g)$, and so we get that $\hat{\nu}$ is given by eqs. (\ref{eq: meas quan bar expl repr nu hat}) and (\ref{eq: barycenters yij}).
\end{proof}

\subsection{Bounds on the speed of convergence}
We will now show that the average diameter of the the elements of $\Pi_1$ (resp.~$\Pi_2$) with respect to $\mu$ (resp.~$\nu$)  is an upper-bound for the Wasserstein-1 distance $\cW_1(\mu,\hat{\mu})$ (resp.~$\cW_1(\nu,\hat{\nu})$) between the laws of $X$ and $\hat{X}$ (resp.~$Y$ and $\hat{Y}$) which appear in Proposition \ref{Proposition: explicit representation for martingale quantisation rvs}. We remind the reader that  the \emph{diameter} of $E\subseteq B$ is defined as 
$$
\text{diam}\pp{E} \coloneqq \sup_{x, y\in E}\norm{x-y}\in [0,\infty] .
$$

\begin{theorem}
\label{th: speed of convergence}
Under the assumptions of Proposition \ref{Proposition: explicit representation for martingale quantisation rvs}, the following estimates hold:
\begin{subequations}
\label{eq: bound W1 barycentric quantisation both}	
\begin{align}
\label{eq: bound W1 proper barycentric quantisation}
	\cW_p\pp{\mu,\hat{\mu}} \leq \norm{X - \hat{X}}_{L^p\pp{\bP,B}} \leq 
	\sum_i (\text{diam}\pp{P_{1,i}})^p \mu\pp{P_{1,i}} \\
	\label{eq: bound W1 barycentric quantisation}
	\cW_p\pp{\nu,\hat{\nu}} \leq \norm{Y - \hat{Y}}_{L^p\pp{\bP,B}} \leq   
	\sum_{j} (\text{diam}\pp{P_{2,j}})^p \nu\pp{P_{2,j}}) .
\end{align}
\end{subequations}
where the sum over $i$ (resp.~$j$) is over the $i$ such that  $ \mu\pp{P_{1,i}}>0$ (resp.~$j$ such that  $ \nu\pp{P_{2,j}}>0$). In particular, in the setting of Theorem \ref{th: main result on rvs}, calling $\mu,\nu,\mu_n,\nu_n$ the laws of $X, Y, X_n, Y_n$, we obtain the bounds
\begin{align}
\label{eq: bound W1 martingale quantisation}
	\cW_p\pp{\mu,\mu_n} \leq 
	\sum_i (\text{diam}\pp{P^n_{1,i}})^p \mu\pp{P^n_{1,i}} , \quad 		\cW_p\pp{\nu,\nu_n}   \leq   
	\sum_{j} (\text{diam}\pp{P^n_{2,j}})^p \nu\pp{P^n_{2,j}} .
\end{align}
\end{theorem}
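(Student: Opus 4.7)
The plan is to prove the two displays in (\ref{eq: bound W1 barycentric quantisation both}) by the same short three-step pattern: first invoke Lemma \ref{le: W_p bounded by L^p} to turn the Wasserstein distance into an $L^p$ control of the natural coupling, then use the explicit expressions from Proposition \ref{Proposition: explicit representation for martingale quantisation rvs} to reduce to a sum over partition cells, and finally use the Bochner triangle inequality to bound each contribution by the cell's diameter. Indeed, since $(X,\hat X)$ has marginals $\mu$ and $\hat\mu$, Lemma \ref{le: W_p bounded by L^p} gives $\cW_p(\mu,\hat\mu)\le \norm{X-\hat X}_{L^p(\bP,B)}$ for free; the analogous argument with $(Y,\hat Y)$ yields the first inequality of (\ref{eq: bound W1 barycentric quantisation}).

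For the second inequality in (\ref{eq: bound W1 proper barycentric quantisation}), I would decompose
$$\bE \norm{X-\hat X}^p = \sum_i \bE\sq{\norm{X-x_i}^p \mathbbm{1}_{\{X\in P_{1,i}\}}}$$
using (\ref{eq: quan bar expl repr X hat}). The central observation is that, writing $\mu_i=\mathbbm{1}_{P_{1,i}}\mu/\mu(P_{1,i})$, the barycentre is $x_i=\int_{P_{1,i}} y\, \mu_i(dy)$, so on the event $\{X\in P_{1,i}\}$ I may bring $X$ inside the integral to get $X-x_i=\int_{P_{1,i}} (X-y)\,\mu_i(dy)$, and the Bochner triangle inequality then yields
$$\norm{X-x_i}\le \int_{P_{1,i}} \norm{X-y}\,\mu_i(dy) \le \text{diam}(P_{1,i}),$$
since both $X$ and the dummy variable $y$ lie in $P_{1,i}$. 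Note that no convexity assumption on $P_{1,i}$ is needed here: the barycentre may escape the cell, but its distance from any point in the cell stays controlled by the diameter. Raising to the $p$-th power and summing gives the claimed bound.

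The second inequality in (\ref{eq: bound W1 barycentric quantisation}) is proved by the same pattern with the refined partition: decompose $\bE\norm{Y-\hat Y}^p$ over the cells $\{X\in P_{1,i},\ Y\in P_{2,j}\}$ via (\ref{eq: quan bar expl repr Y hat}); observe that $y_{i,j}$ is a Bochner average of points in $P_{2,j}$, since $\nu_{i,j}$ is supported in $P_{2,j}$; apply the same Bochner triangle step to obtain $\norm{Y-y_{i,j}}\le \text{diam}(P_{2,j})$ on this event; and finally collapse the $i$-sum via the marginal identity $\sum_i \pi(P_{1,i}\times P_{2,j})=\nu(P_{2,j})$. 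The bounds (\ref{eq: bound W1 martingale quantisation}) then follow by applying the two displays above with $\Pi_k=\Pi_k^n$ to the $X_n,Y_n$ of Theorem \ref{th: main result on rvs}. There is no real obstacle: the only point that needs a moment's care is the Bochner triangle step just described, precisely because the Borel sets $P_{1,i}$ and $P_{2,j}$ are not assumed convex and so the barycentres need not lie inside their respective cells.
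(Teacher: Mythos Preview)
Your proof is correct, and differs from the paper's in one genuinely interesting respect. The paper bounds $\norm{Y-y_{i,j}}$ by first locating $y_{i,j}\in\overline{\text{co}}(P_{2,j})$ and then invoking an auxiliary geometric lemma (Lemma~\ref{prop: diam closed convex hull}) stating that $\text{diam}(\overline{\text{co}}(E))=\text{diam}(E)$ for any $E\subseteq B$. Your Bochner triangle inequality step,
\[
\norm{Y-y_{i,j}}=\Big\|\int_{P_{2,j}}(Y-y)\,\nu_{i,j}(dy)\Big\|\le \int_{P_{2,j}}\norm{Y-y}\,\nu_{i,j}(dy)\le \text{diam}(P_{2,j}),
\]
bypasses that lemma entirely and is arguably more direct; as you note, the barycentre may leave the cell but its distance from any cell point is still controlled. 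A second, minor difference: the paper proves only the $\hat Y$-bound explicitly and then derives the $\hat X$-bound by applying the former to the auxiliary data $(X',Y',\Pi_1',\Pi_2')=(\bE[X],X,\{B\},\Pi_1)$, whereas you prove both bounds by the same direct pattern. Neither approach is longer, but yours is self-contained while the paper's highlights that the proper case is formally a special instance of the general barycentric one.
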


To prove Theorem \ref{th: speed of convergence}, we need a lemma, which uses the well known fact that  the convex hull of $E$, denoted with $\text{co}(E)$,  can be obtained as $\text{co}(E)=\cup_{n\in \bN} \text{co}^n(E)$, where $\text{co}^n(E)$ is defined by induction as follows: 
$$\text{co}^1(E)\coloneqq \{tx+(1-t)y: x,y\in E, t\in [0,1]\}, \quad \text{co}^{n+1}(E)\coloneqq \text{co}^1(\text{co}^n(E)), n\in \bN .$$
We denote with $\overline{\text{co}}(E)$ the closed convex hull of $E$, which coincides with the closure $\overline{\text{co}(E)}$ of $\text{co}(E)$.
\begin{lemma}
\label{prop: diam closed convex hull}
If $F \subseteq E \subseteq B$ then $\text{diam}\pp{\overline{\text{co}}\pp{E}}=\text{diam}(E) \geq \text{diam}(F)$.
\end{lemma}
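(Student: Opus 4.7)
The second inequality $\text{diam}(E)\geq \text{diam}(F)$ is immediate from $F\subseteq E$ and the fact that the diameter is defined as a supremum, so the real content is the equality $\text{diam}(\overline{\text{co}}(E))=\text{diam}(E)$. The inequality $\geq$ is also trivial since $E\subseteq \overline{\text{co}}(E)$. Thus the plan reduces to proving $\text{diam}(\overline{\text{co}}(E))\leq \text{diam}(E)$.

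The approach is to peel off the operations one at a time: first show that passing from a set $A$ to $\text{co}^1(A)$ does not increase the diameter, then iterate to cover $\text{co}(E)=\cup_n \text{co}^n(E)$, and finally deal with the closure. For the first step, the key observation is that if $x=tu+(1-t)v$ is a convex combination of points $u,v\in A$, then for any $y\in A$ the triangle inequality gives
\[
\|x-y\|\leq t\|u-y\|+(1-t)\|v-y\|\leq \text{diam}(A),
\]
so $\sup_{x\in \text{co}^1(A),\,y\in A}\|x-y\|\leq \text{diam}(A)$. Applying the same argument again to a second convex combination $x'=t'u'+(1-t')v'\in\text{co}^1(A)$ and using the bound just obtained gives $\|x-x'\|\leq \text{diam}(A)$, i.e.\ $\text{diam}(\text{co}^1(A))\leq \text{diam}(A)$.

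A straightforward induction using the recursive definition $\text{co}^{n+1}(E)=\text{co}^1(\text{co}^n(E))$ then yields $\text{diam}(\text{co}^n(E))\leq \text{diam}(E)$ for all $n$, and hence $\text{diam}(\text{co}(E))\leq \text{diam}(E)$, because any two points of $\text{co}(E)=\cup_n \text{co}^n(E)$ both lie in some common $\text{co}^N(E)$. Finally, to pass from $\text{co}(E)$ to $\overline{\text{co}}(E)=\overline{\text{co}(E)}$, I would use the standard fact that taking closures does not change the diameter: given $x,y\in\overline{A}$ with sequences $x_k\to x,\, y_k\to y$ in $A$, continuity of the norm gives $\|x-y\|=\lim \|x_k-y_k\|\leq \text{diam}(A)$.

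There is no real obstacle here; the only subtlety is resisting the temptation to invoke Carathéodory's theorem or a representation of arbitrary elements of $\text{co}(E)$ as single convex combinations, since the inductive description $\text{co}(E)=\cup_n \text{co}^n(E)$ given in the paper already makes the argument wholly elementary in a general Banach space. The proof is essentially one line of triangle inequality, one induction, and one closure/continuity step.
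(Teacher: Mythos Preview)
Your proof is correct and shares the paper's overall scaffold: reduce to the inequality $\text{diam}(\text{co}^1(A))\leq\text{diam}(A)$, iterate over $n$ via $\text{co}^{n+1}(E)=\text{co}^1(\text{co}^n(E))$, and then pass to the closure by continuity of the norm. The difference lies in how the key $\text{co}^1$ step is handled.

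You argue by a two-stage triangle inequality: first bound $\|x-y\|$ for $x\in\text{co}^1(A)$, $y\in A$, then bootstrap to $x,x'\in\text{co}^1(A)$. The paper instead proceeds geometrically: given $z^1,z^2\in\text{co}^1(E)$ written as convex combinations of four points $F=\{x^1,x^2,y^1,y^2\}\subseteq E$, it selects $a,b\in F$ realising $d=\text{diam}(F)$ and asserts that $F\subseteq\overline{B}_{d/2}\big(\tfrac{a+b}{2}\big)$, so that $z^1,z^2$ lie in this convex ball and $\|z^1-z^2\|\leq d$. That containment claim, however, is not valid in general: three vertices of an equilateral triangle of side $1$ have diameter $1$, yet the third vertex sits at distance $\sqrt{3}/2>1/2$ from the midpoint of the other two. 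Your direct triangle-inequality route avoids this issue entirely; it is both more elementary and, unlike the paper's ball argument as written, actually sound.
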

\begin{proof}
Trivially, $F \subseteq E \subseteq B$ implies
$\text{diam}(E) \geq \text{diam}(F)$. This implies that $(\text{co}^n(E))_n$ is increasing, and so  $\text{diam}(\text{co}(E))=\sup_{n} \text{diam}(\text{co}^n(E))$. Since the norm is continuous, $\text{diam}(E)= \text{diam}(\bar{E})$, so the rest of the thesis follows once we prove the inequality $ \text{diam}(E)\geq \text{diam}(\text{co}^1(E))$. 
To prove it, given $\epsilon>0$ choose $z^1,z^2\in \text{co}^1(E)$ such that $\|z^1-z^2\|\geq \text{diam}(\text{co}^1(E)) - \epsilon$.
For $i=1,2$ write $z^i$ as
$$
z^i=t^i x^i+(1-t^i)y^i , 
\quad 
\text{ where }
x^i,y^i\in E, t^i\in [0,1] .
$$
Choose $a, b \in F\coloneqq \{x^1,x^2,y^1,y^2\}$ such that
$$
\norm{a-b} = \text{diam}(F)\eqqcolon d\leq \text{diam}(E) .
$$ 
If $c\coloneqq \frac{a+b}{2}$ is the mid-point between $a$ and $b$, and $r\coloneqq d/2$, then $F$ is a subset of $\overline{B}_{r}(c)\coloneqq \gp{z: \abs{z-c}\leq r}$. Since $\overline{B}_{r}(c)$ is convex and $F\subseteq \overline{B}_{r}(c)$,  we have $z^1, z^2\in \overline{B}_{r}(c)$, and so $\norm{z^1-z^2}\leq d$.  We have proved that
$$ \text{diam}(E)\geq d \geq \norm{z^1-z^2} \geq  \text{diam}(\text{co}^1(E)) - \epsilon, \text{ for all } \epsilon>0,$$
and taking $\epsilon\downarrow 0$, we conclude.
\end{proof} 

In the course of the following proof, we will repeatedly make use of the fact that, if $C\subseteq B$ is closed and $\mu$ is a probability on $C$ then its barycentre $\text{bar}(\mu)\coloneqq \int x \mu(dx)$ belongs to $C$; this well known fact easily follows from Hanh-Banach's theorem, as in~\cite[Proposition 2.39]{LuJaMaNeSp10}. 

\begin{proof}[Proof of Theorem \ref{th: speed of convergence}]
 If $P_y$ denotes the projection of $B \times B$ onto its second coordinate, the explicit expression for $\hat{Y}$ gives that 
\begin{align}
\label{eq: value gen bar}
&	\norm{Y -\hat{Y}}_{L^p\pp{\bP,B}} = 
	\sum_{i, j} \int_{P_{1,i}\times P_{2,j}} 
	\norm{y- P_y(\text{bar}\,\pp{\pi_{i,j}}) }^p \pi(dx,dy) .
\end{align} 
Since  $\pi_{i,j}\pp{\pp{P_{1,i}\times P_{2,j}}^c}=0$, we have that $\text{bar}\pp{\pi_{i,j}}\in \overline{\text{co}}\pp{P_{1,i}\times P_{2,j}} $, and since
$$ \overline{\text{co}}(E \times F)=\overline{\text{co}}(E ) \times \overline{\text{co}}(F), \quad \text{ for all } E,F \subseteq B,$$
we get that $P_y(\text{bar}\,\pp{\pi_{i,j}})\in \overline{\text{co}}(P_{2,j})$, and so 
$$ 	\norm{y- P_y(\text{bar}\,\pp{\pi_{i,j}}) } \leq \text{diam}(\overline{\text{co}}\pp{P_{2,j}}) , \quad (x,y) \in P_{1,i} \times P_{2,j}$$
from which, using Proposition \ref{prop: diam closed convex hull},  we get
$$ 	\norm{y- P_y(\text{bar}\,\pp{\pi_{i,j}}) } \mathbbm{1}_{P_{1,i} \times P_{2,j}}(x,y)\leq \text{diam}\pp{P_{2,j}} \mathbbm{1}_{P_{1,i} \times P_{2,j}}(x,y)$$
Taking $p$-powers, summing over $i,j$ and integrating with respect to $\pi$, using (\ref{eq: value gen bar}), we finally get (\ref{eq: bound W1 barycentric quantisation}).
Finally, (\ref{eq: bound W1 proper barycentric quantisation}) follows from applying 
 (\ref{eq: bound W1 barycentric quantisation}) not to $X,Y,\Pi_1,\Pi_2$, but rather to 
 $$X'\coloneqq \bE[X], \quad Y'\coloneqq X, \quad \Pi_1'\coloneqq \{B\}, \quad \Pi_2'\coloneqq \Pi_1 ,$$
which we can do since\footnote{If this reasoning seems puzzling, it should become clear after reading section \ref{Section: Barycentric quantisation}, in particular Theorem \ref{thm: estimates speed quant bar}.} $\bE[Y'|X']=X'$.
\end{proof}

Notice that, although the bounds proved in Theorem \ref{th: speed of convergence} have the pleasing quality of being easy to compute numerically for partitions made `simple' sets, they are not sharp: it is possible to find $\Pi_1^n, \Pi_2^n$ such that the upper-bounds do not converge to 0 even if $(\mu_n)_n, (\nu_n)_n$ converge to $\mu, \nu$, and the upper-bounds can even be vacuous (i.e.~they equal $+\infty$) for some $\pi,\Pi_1^n, \Pi_2^n$. 

\begin{remark}
\label{re: countable partitions}
We could have worked identically if considering partitions which are countable, instead of finite. This can be important, because $B$ admits a countable   partitions made of bounded sets, in fact for each $\epsilon>0$ there exists a countable partition made of sets of diameter at most $\epsilon$.
Using such partitions is useful since one can also replace the bounds (\ref{eq: bound W1 barycentric quantisation both}) with
\begin{align}
\label{ }
	\cW_p\pp{\mu,\hat{\mu}} \leq 
	\sup_i \; \pp{\text{diam}\pp{P_{1,i}}}^p , \quad 	
	\cW_p\pp{\nu,\hat{\nu}} \leq 
	\sup_j \; \pp{\text{diam}\pp{P_{2,j}}}^p ,
\end{align} 
which are less precise, but have the interesting feature that the upper-bounds do not depend on $\mu,\nu$. Of course,  one could achieve the same result working with finite partitions whose elements which intersect the  supports of the measures are bounded, and such partitions always exist when the measures are compactly supported. In particular, if $\mu,\nu$ are compactly supported probabilities on $B=\bR^N$, and the partitions are made of sets whose diameter converges to zero (e.g.~hypercubes with sides of length $1/2^n$), the given bounds always go to 0.
\end{remark}

\subsection{Stability}
\label{Section: Stability}
Question \ref{Q3} addresses the so-called stability result for a general cOT problem. As already remarked, in the setting of MOT, this question is highly non trivial. 
Indeed, a positive answer in this direction has been provided on the real line setting from different perspectives \cite{Backhoff2019, Juillet2016, Wiesel2019} whereas only the very recent work of \cite{BrJu22} shows that the stability result does not hold for the MOT problem on $\bR^d, d\geq 2$.
Therefore, we reformulate question \ref{Q3} as follows:
\textit{if $\pp{\mu_n,\nu_n} \to \pp{\mu,\nu}$ 
under which additional assumptions do we have that}
	\begin{align}
	\label{eq: stability}
		P_n \coloneqq
		\inf_{\cM\pp{\mu_n,\nu_n}} \bE\sq{c\pp{X_n, Y_n}} \to \inf_{\cM\pp{\mu,\nu}} \bE\sq{c\pp{X, Y}} \eqqcolon 
		P\ ?
\end{align} 
Here, we provide a rather weak answer to this question leaving a more structured result for further investigation. 
In particular, this is linked with the fact that the martingale quantisation algorithm needs as an input an arbitrary (i.e.~not necessarily an optimal) $\tilde{\pi}=\cL(X, Y)\in\cM(\mu, \nu)$  and outputs $\pi_n = \cL\pp{X_n, Y_n}$.
One can choose such $\tilde{\pi}, \pi_n$ as shown in Lemma \ref{le: weak stability MOT}.\\

\noindent Recall that $f:B\times B \to \bR^+$ is said to have sub-linear growth if
\begin{equation}
\label{eq: sub-linear growth cost function}
	\abs{f\pp{x, y}} \leq C\pp{1+\norm{x}+\norm{y}}
\end{equation}
for some $C\in \bR$, i.e.~if $f \in C^0_p(B\times B)$ with $p=1$.

The following result is the martingale analogue of a well-known result in OT.

\begin{lemma}
\label{le: {pi_n} is tight and convergence}
Let $\tilde{\pi}\in\cM\pp{\mu,\nu},\ \mu_n \to \mu, \nu_n \to \nu,\ \pi_n \in \cM\pp{\mu_n,\nu_n}$ with $\pi_n \to \tilde{\pi}$. Then, the sequence $\pp{\pi_n}_n$ is tight so that it admits accumulation points.\ Any accumulation point belongs to $\cM\pp{\mu,\nu}$.
\end{lemma}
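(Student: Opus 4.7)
The plan is to prove the two claims separately: first tightness of $(\pi_n)_n$ (reducing to tightness of the marginals), then that every weak accumulation point lies in $\cM(\mu,\nu)$ (splitting into marginal identification and the martingale constraint).

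For tightness, since $\mu_n\to\mu$ and $\nu_n\to\nu$ in $\cP_1(B)$, by Prokhorov each marginal sequence is tight. Given $\varepsilon>0$ I pick compact sets $K_1,K_2\subseteq B$ with $\sup_n\mu_n(K_1^c),\sup_n\nu_n(K_2^c)<\varepsilon/2$. Then $K_1\times K_2\subseteq B\times B$ is compact, and because $\pi_n\in\Pi(\mu_n,\nu_n)$,
\[ \pi_n\pp{\pp{K_1\times K_2}^c}\leq\mu_n(K_1^c)+\nu_n(K_2^c)<\varepsilon, \]
which gives tightness; Prokhorov then delivers weakly convergent subsequences.

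Let $\pi$ be any accumulation point, say $\pi_{n_k}\to\pi$ weakly. Continuity of the projections $P_x,P_y:B\times B\to B$ yields $\mu_{n_k}=P_{x\#}\pi_{n_k}\to P_{x\#}\pi$ and $\nu_{n_k}=P_{y\#}\pi_{n_k}\to P_{y\#}\pi$ weakly; uniqueness of weak limits then gives $\pi\in\Pi(\mu,\nu)$. For the martingale constraint I need $\int g(x)(y-x)\,d\pi=0$ for every $g\in C^0_b(B)$; by Hahn-Banach this is equivalent to the scalar identity
\[ \int g(x)\,\phi(y-x)\,d\pi(x,y)=0\qquad\text{for all }\phi\in B^*,\ g\in C^0_b(B). \]
Each $\pi_{n_k}$ satisfies this, so I only have to pass to the limit. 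The integrand is continuous with sub-linear growth in $(x,y)$, so weak convergence alone is not enough. The fix is to upgrade weak convergence of $\pi_{n_k}$ to $\cW_1$-convergence on $B\times B$: combining weak convergence with
\[ \int\pp{\|x\|+\|y\|}\,d\pi_{n_k}=\int\|x\|\,d\mu_{n_k}+\int\|y\|\,d\nu_{n_k}\longrightarrow\int\|x\|\,d\mu+\int\|y\|\,d\nu, \]
the characterisation recalled in Remark \ref{Remark: equivalence convergence C0b C0p} gives $\pi_{n_k}\to\pi$ in $\cP_1(B\times B)$. Since $(x,y)\mapsto g(x)\phi(y-x)$ is continuous with sub-linear growth, this implies $\int g(x)\phi(y-x)\,d\pi_{n_k}\to\int g(x)\phi(y-x)\,d\pi$; as the left-hand side is $0$ for each $k$, the limit vanishes and $\pi\in\cM(\mu,\nu)$.

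The main obstacle is pinning down exactly the ingredient required to push the martingale condition through the limit: the test function $g(x)(y-x)$ is unbounded in $B$, so pure weak convergence is insufficient. The resolution is that $\cP_1$-convergence of the \emph{marginals} buys us uniform integrability of $\|y-x\|$ under $(\pi_{n_k})_k$, equivalently the $\cW_1$-convergence of $\pi_{n_k}$ on the product space used above; this is the one place where the strength of the marginal convergence is genuinely exploited.
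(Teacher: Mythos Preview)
Your proof is correct and follows essentially the same route as the paper: tightness from the marginals via Prokhorov, identification of the marginals of any accumulation point, upgrade from weak to $\cW_1$-convergence on $B\times B$ via convergence of first moments, and then passage to the limit in the martingale constraint using sub-linear growth. Your extra Hahn--Banach reduction to scalar test functions $g(x)\phi(y-x)$ is a welcome bit of added rigor in the Banach-space setting (the paper applies Remark~\ref{Remark: equivalence convergence C0b C0p} directly to the $B$-valued integrand $g(x)(y-x)$), but the underlying argument is the same.
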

\begin{proof}
By Prokhorov's Theorem \cite[Theorem 5.2]{VanGaans2003}, $\pp{\mu_n}_n$ and $\pp{\nu_n}_n$ are (uniformly) tight, therefore, for every $\epsilon>0$ there exist compact sets $K^\mu_\epsilon,\ K^\nu_\epsilon \subseteq B$ such that $ \mu_n\pp{B\setminus K^\mu_\epsilon} \leq \epsilon/2,\ \nu_n\pp{B\setminus K^\nu_\epsilon} \leq \epsilon/2$. Then,
$$
\pi_n\pp{B\times B \setminus K^\mu_\epsilon \times K^\nu_\epsilon} \leq 
\mu_n\pp{B\setminus K^\mu_\epsilon} + 
\nu_n\pp{B\setminus K^\nu_\epsilon} \leq 
\epsilon ,
$$
so that $\pp{\pi_n}_n$ is tight and there exist $\underline{\pi} \in \cP\pp{B\times B}$ such that $\pi_n$ weakly converges to $\underline{\pi}$ (up to taking a subsequence without relabelling). In particular, for any $f\in C^0_b(B\times B)$, we have
\begin{align}
\label{eq: weak convergence in lemma}
	\int f(x,y)\pi_n(dx, dy) \to \int f(x,y) \underline{\pi}(dx, dy) .
\end{align}
Let $g ,h \in C^0_b(B)$. Choosing $f(x,y) = g(x)$ and $f(x,y) = h(y)$ in (\ref{eq: weak convergence in lemma}), we obtain that $\underline{\pi}\in\Pi\pp{\mu,\nu}$. Moreover, 
\begin{align*}
	\int \norm{(x,y)}\pi_n(dx, dy) & =
	\int \norm{x}\mu_n(dx) +
	\int \norm{y}\nu_n(dy) 
	& \to 
	\int \norm{x}\mu(dx) +
	\int \norm{y}\nu(dy) \\
 	& =
	\int \norm{(x,y)}\underline{\pi}(dx, dy) ,
\end{align*}
and so, by Remark \ref{Remark: equivalence convergence C0b C0p},  $\pi_n \to \underline{\pi}$.
Finally, since the map $(x,y) \mapsto g(x)(y-x)$  has sub-linear growth, we have that
\begin{align*}
	0 =
	\int (y-x)g(x)\pi_n(dx, dy) \to
	\int (y-x)g(x)\underline{\pi}(dx, dy) ,
\end{align*}
so that $\underline{\pi} \in \cM\pp{\mu,\nu}$.
\end{proof}

\begin{lemma}
\label{le: weak stability MOT}
Under the assumptions of Lemma \ref{le: {pi_n} is tight and convergence}, if $c:B\times B \to \bR^+$ is a continuous cost function with sub-linear growth, then
\begin{align}
\label{eq: weak stability MOT}
\begin{split}
	\bE^{\tilde{\pi}}\sq{c\pp{X, Y}} \geq 
	\limsup_n P_n &\geq 
	\liminf_n P_n \geq
	P .
\end{split}
\end{align}
In particular, (\ref{eq: stability}) holds along a minimising subsequence if 
\begin{equation}
\label{eq: condition stability}
\tilde{\pi} = \pi^*\in \text{argmin}_{\pi\in \cM\pp{\mu,\nu}} \bE\sq{c\pp{X, Y}} .	
\end{equation}
\end{lemma}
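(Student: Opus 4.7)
The plan is to establish the inequality chain by proving the two outer inequalities separately, relying crucially on the fact (embedded in the proof of Lemma \ref{le: {pi_n} is tight and convergence}) that weak convergence of $\pi_n$ to $\tilde{\pi}$ upgrades to convergence in $\cP_1(B\times B)$, because the first moments of the marginals also converge. Combined with the sub-linear growth of $c$ and the equivalences of Remark \ref{Remark: equivalence convergence C0b C0p}, this upgrade yields $\int c\, d\pi_n \to \int c\, d\tilde{\pi}$, and an analogous statement will hold for any auxiliary sequence of martingale transports we construct along the way.

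For the upper bound $\bE^{\tilde{\pi}}[c(X,Y)] \geq \limsup_n P_n$, I observe that $\pi_n \in \cM(\mu_n,\nu_n)$ is by hypothesis feasible for the $n$-th MOT problem, so $P_n \leq \int c\, d\pi_n$, and passing to the limit using the continuity of $\pi \mapsto \int c\, d\pi$ along $(\pi_n)_n$ gives $\limsup_n P_n \leq \int c\, d\tilde{\pi}=\bE^{\tilde{\pi}}[c(X,Y)]$. For the lower bound $\liminf_n P_n \geq P$, I fix a subsequence $(P_{n_k})_k$ realising $\liminf_n P_n$ and, using only the definition of the infimum, choose $\pi^*_{n_k} \in \cM(\mu_{n_k},\nu_{n_k})$ with $\int c\, d\pi^*_{n_k} \leq P_{n_k} + 1/k$. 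The tightness portion of the proof of Lemma \ref{le: {pi_n} is tight and convergence} applies verbatim to $(\pi^*_{n_k})_k$, since it relied only on the tightness of the marginals $(\mu_n)_n, (\nu_n)_n$ and not on any preselected limit; therefore, after passing to a further subsequence, $\pi^*_{n_k}$ converges in $\cP_1$ to some $\pi^* \in \cM(\mu,\nu)$. Then $P \leq \int c\, d\pi^* = \lim_k \int c\, d\pi^*_{n_k} \leq \liminf_k P_{n_k} = \liminf_n P_n$, as required.

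The ``in particular'' clause is then immediate: if $\tilde{\pi}=\pi^*$ is optimal for MOT, then $\bE^{\tilde{\pi}}[c(X,Y)] = P$, and the chain collapses to equalities, so $P_n \to P$. The most delicate step is the lower bound, where one must recognise that the tightness argument of Lemma \ref{le: {pi_n} is tight and convergence} does not depend on a preselected limit and can be reused on an auxiliary sequence of near-minimizers; once this is noticed, the remainder is a routine transcription of the classical stability argument for unconstrained OT, with the essential extra content being only that the martingale constraint passes to $\cP_1$-limits (again from Lemma \ref{le: {pi_n} is tight and convergence}).
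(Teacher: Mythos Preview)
Your proof is correct and follows essentially the same strategy as the paper's: use the given feasible $\pi_n$ to bound $\limsup_n P_n$ from above by $\bE^{\tilde{\pi}}[c]$, and use tightness of (near-)minimisers $\pi^*_n$ together with closedness of the martingale constraint under $\cP_1$-limits to bound $\liminf_n P_n$ from below by $P$. Your version is marginally more careful in two places---you work with $1/k$-near-minimisers (so you do not need to invoke existence of exact minimisers), and you explicitly extract a subsequence realising $\liminf_n P_n$ before applying tightness---but these are cosmetic refinements of the same argument rather than a different route.
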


\begin{proof}
For $n\in \bN$, let $\pi_n$ be a martingale quantisation which can be obtained starting from the (sub-optimal) transport $\tilde{\pi}$.
Then, by the continuity of $c$ and assumption (\ref{eq: sub-linear growth cost function}), it follows that
$$
\bE^{\pi_n}\sq{c(X,Y)} \to 
\bE^{\tilde{\pi}}\sq{c(X,Y)} \geq \min_{\pi \in \cM\pp{\mu,\nu}} \bE^{\pi}\sq{c} = P .
$$
Let $\pi^*_n \in \cM\pp{\mu_n,\nu_n}$ be the minimiser. Then, by Lemma \ref{le: {pi_n} is tight and convergence}, $\pp{\pi^*_n}_n$ is tight and there exists $\underline{\pi}\in\cM\pp{\mu,\nu}$ such that $\pi^*_n \to \underline{\pi}$ (up to taking a subsequence without relabelling).
It then follows that
$$
\bE^{\pi_n}\sq{c(X,Y)} \geq 
\min_{\pi \in \cM\pp{\mu_n, \nu_n}}\bE^{\pi}\sq{c(X,Y)} =
P_n =
\bE^{\pi^*_n}\sq{c(X,Y)} \to 
\bE^{\underline{\pi}}\sq{c(X,Y)} ,
$$ 
Therefore,
$$
\bE^{\underline{\pi}}\sq{c(X,Y)} \geq 
\min_{\pi \in \cM\pp{\mu,\nu}}  \bE^{\pi}\sq{c(X,Y)} .
$$
Putting the pieces together, we get the claim.

If particular, if (\ref{eq: condition stability}) holds, i.e.~$\tilde{\pi} = \pi^*$ is the minimiser, then
\begin{align*}
	\limsup_n P_n &\leq
	\limsup_n \bE\sq{c\pp{X_n,Y_n}}\\  
	&=
	\bE^{\tilde{\pi}}\sq{c\pp{X,Y}}\\
	&\leq
	\inf_{\cM\pp{\mu,\nu}}\bE\sq{c\pp{X,Y}} = P ,
\end{align*}
where: the first inequality follows by definition; the equality follows from $c\pp{X_n, Y_n} \to c\pp{X, Y}$ $\bP$-a.s. and\footnote{Because $\gp{c\pp{X_n, Y_n}, n\in\bN}$ is uniformly integrable.} in $L^1\pp{B,\bP}$ since $c$ is assumed to be continuous and with sub-linear growth and finally, the last inequality follows from (\ref{eq: condition stability}).
\end{proof}

\begin{remark}
The existence of the optimiser $\pi^*$ is proved in \cite[Theorem 1.1]{Beiglbock2013} on the real-line and in \cite{Zaev2015} in the general setting of Banach spaces.
\end{remark}
\begin{remark}
Clearly, Lemma \ref{le: weak stability MOT} has limited usefulness, as it states that in order to approximate the optimal value one should already know the optimal $\pi^*$, in which case to calculate the optimal value it is simpler to compute $\bE^{\pi^*}\sq{c(X, Y)} $ rather than solving $\inf_{\cM\pp{\mu_n, \nu_n}} \bE\sq{c(X, Y)}$ and  computing its liminf. However, Lemma \ref{le: weak stability MOT} suggests a recursive scheme which might approximate the optimal value $P$ of the MOT problem and, at least, obtains smaller and thus more  precise optimal values.

Indeed, starting from the given transport  $\pi^{(0)}\coloneqq\tilde{\pi}\in\cM\pp{\mu,\nu}$, by Lemma \ref{le: weak stability MOT} we can construct $\pi^{(1)}\coloneqq \underline{\pi} \in \cM\pp{\mu, \nu}$ such that 
	$P^{(1)}\coloneqq 
	\bE^{\pi^{(1)}}\sq{c(X,Y)} \leq 
	\bE^{\pi^{(0)}}\sq{c(X,Y)} \eqqcolon 
	P^{(0)}$. Reiterating this step, one could obtain a sequence $\pp{P^{(l)}}_{l\in\bN}$ satisfying
$$
P^{(0)} \geq
\ldots \geq
P^{(l)} \geq
P^{(l+1)} \geq
\ldots \geq
P ,
\quad l \geq 1 .
$$	
\end{remark}

\section{Numerical examples}
\label{Section: Numerical examples}
In this section, we provide some examples in which we implement the proposed martingale quantisation scheme and solve the corresponding discretised MOT, illustrating cases in which (\ref{eq: stability}) seems to hold. 

\begin{example}
\label{Example: Example 1}
Consider an MOT problem on the real line between two uniform marginal distributions 
\begin{align*}
	\mu(dx) & = \frac{1}{2}\mathbbm{1}_{[-1,1]}(x) dx \\
	\nu(dy) & = \frac{1}{4}\mathbbm{1}_{[-2,2]}(y)dy ,
\end{align*}
and cost function given by
$c(x, y) \coloneqq \abs{y-x}^\rho,\ \rho = 2.3$, i.e.~
\begin{equation}
\label{eq: MOT Ex 1}		
	P \coloneqq \min_{\pi\in\cM\pp{\mu, \nu}}\int_{\bR\times\bR}\abs{y-x}^\rho \pi(dx,dy) .
\end{equation}
For this problem, it is known \cite[Example 6.1]{Alfonsi2019IJTAF} the expression of the martingale optimal transport 
\begin{equation}
\label{eq: martingale optimal transport Example 1}	
\pi^*(dx,dy) = \frac{1}{2}\mathbbm{1}_{[-1,1]}(x)\frac{\delta_{x+1}(dy) + \delta_{x-1}(dy)}{2}dx ,
\end{equation}
so that the corresponding optimal value of the problem is $P = \int_{\bR\times\bR} \abs{y-x}^\rho \pi^*(dx,dy) = 1$.\footnote{
The same holds as long as $\rho>2$. Analogously, one can consider the case $\rho<2$ and the MOT problem in (\ref{eq: MOT Ex 1}) with $\max$ in place of $\min$.} 
Firstly, we apply our martingale quantisation algorithm using as initial martingale transport $\tilde{\pi}$ the (sub-optimal) left-curtain coupling\footnote{
Identical results can be obtained using the right-curtain coupling
$$
{\small{
\pi^{rc}(dx,dy) \coloneqq 
\frac{1}{2}\mathbbm{1}_{[-1,1]}(x)\pp{\frac{3}{4} 
\delta_{\frac{3x}{2}-\frac{1}{2}}(dy) +\frac{1}{4}\delta_{\frac{3}{2} + \frac{x}{2}}}dx	.
}}
$$
} 
$$
\pi^{lc}(dx,dy) \coloneqq 
\frac{1}{2}\mathbbm{1}_{[-1,1]}(x)\pp{\frac{1}{4} 
\delta_{-\frac{x}{2}-\frac{3}{2}}(dy) +\frac{3}{4}\delta_{\frac{3x}{2} + \frac{1}{2}}(dy)}dx ,
$$
and two $n$-partitions  $\gp{P^n_{1,i}}_{i=1}^n, \gp{P^n_{2,j}}_{j=1}^n$ of supp$\pp{\mu}=\sq{-1,1}$ and supp$\pp{\nu}=\sq{-2,2}$, respectively. In particular, since for any integrable function $f$ and $P_i, P_j \subseteq \bR$ it holds
\begin{align*}
	\int_{P_i\times P_j} f(x, y) \tilde{\pi}(dx,dy) & = 
	\int_{P_i} \tfrac{1}{2}\gp{
    \tfrac{1}{4}f\pp{x,\pp{-\tfrac{x}{2}-\tfrac{3}{2}}}\mathbbm{1}_{P_j}\pp{-\tfrac{x}{2}-\tfrac{3}{2}}
    \color{white}}\\
    &\color{white}\gp{  
    \color{black}\quad\quad
	+\tfrac{3}{4}f\pp{x, \pp{\tfrac{3}{2}x + \tfrac{1}{2}}}\mathbbm{1}_{P_j}\pp{\tfrac{3}{2}x + \tfrac{1}{2}}}\color{black}dx ,
\end{align*}
we have that
\begin{align*}
	\omega^{n}_{X,i} 
	& = 
	\mu\pp{P^n_{1,i}} = \int_{P^n_{1,i}}\mu(dx) ,\\
	x^{n}_i 
	& = 
	\frac{1}{\omega^{n}_{X,i}} \int_{P^n_{1,i}} x \mu(dx) ,\\
	\omega^{n}_{Y,i, j} & = 
	\tilde{\pi}\pp{P^n_{1,i}\times P^n_{2,j}} 
	= \int_{P^n_{1,i}}  \tfrac{1}{2}\pp{\tfrac{1}{4}\mathbbm{1}_{P^n_{2,j}}\pp{-\tfrac{x}{2}-\tfrac{3}{2}}+ \tfrac{3}{4}\mathbbm{1}_{P^n_{2,j}}\pp{\tfrac{3x}{2}+\tfrac{1}{2}}}\mu(dx) ,\\
	y^{n}_{i, j} & = 
	\frac{1}{\omega^{(n_\mu)}_{Y, i, j}} 
	\int_{P^n_{1,i}\times P^n_{2,j}} y \tilde{\pi}(dx, dy) \\			
	& = \frac{1}{\omega^n_{Y, i, j}} 
		\int_{P^n_{1,i}}  \tfrac{1}{2}\pp{\tfrac{1}{4}\pp{-\tfrac{x}{2}-\tfrac{3}{2}}\mathbbm{1}_{P^n_{2,j}}\pp{-\tfrac{x}{2}-\tfrac{3}{2}}+ \tfrac{3}{4}\pp{\tfrac{3x}{2}+\tfrac{1}{2}}\mathbbm{1}_{P^n_{2,j}}\pp{\tfrac{3x}{2}+\tfrac{1}{2}}}\mu(dx) .
			\end{align*}
This allows us to generate finitely-supported $\pp{\mu_n}_n, \pp{\nu_n}_n$ and subsequently solve the primal LP corresponding to the discretised MOT $P\pp{\mu_n,\nu_n}$ as shown in Appendix \ref{Section: Appendix - algo primal LP}.
		
In Figure \ref{Figure: Discretisation of Example 1 - Values}, we plot the values $P_n\coloneqq P\pp{\mu_n,\nu_n}$ of the primal LP as a function of quantisation step $n$ which shows the numerical convergence of $P_n$ to $P=1$. 
The heat map of the optimiser is displayed in Figure \ref{Figure: Discretisation of Example 1 - heat map} for $n=100$. As expected, the optimiser $p_n$ (i.e.~the solution of the LP) is entirely concentrated on the lines $y=x\pm 1$ which represent the support of the martingale optimal transport $\pi^*$ in (\ref{eq: martingale optimal transport Example 1}).

\begin{figure}[!htbp]
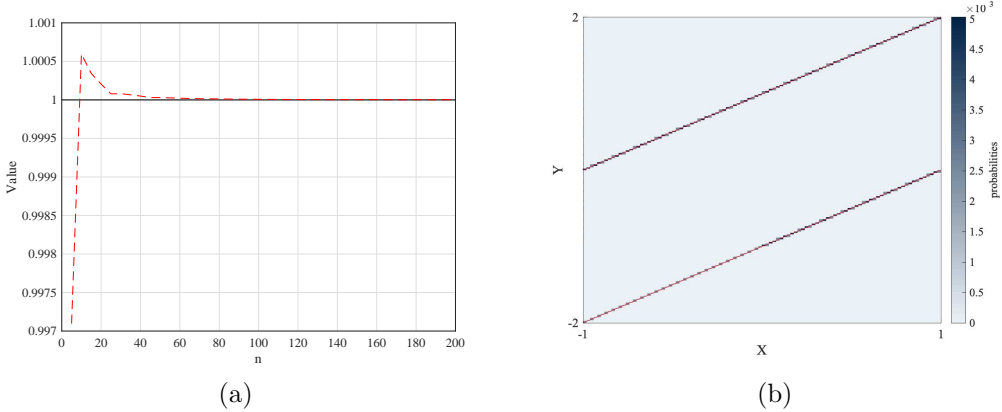

\centering
	\begin{subfigure}[b]{.41\linewidth}
		\includegraphics[width=\linewidth, keepaspectratio]{Example_1/Fig_1_inf}
    	\caption{}
    	\label{Figure: Discretisation of Example 1 - Values}
	\end{subfigure}
\hfil
	\begin{subfigure}[b]{.45\linewidth}
		\includegraphics[width=\linewidth, keepaspectratio]{Example_1/Fig_2_inf}
    	\caption{}
    	\label{Figure: Discretisation of Example 1 - heat map}
	\end{subfigure}
\caption{Results of Example \ref{Example: Example 1}:
(a) Values of $P_n$ for $5\leq n \leq 200$;
(b) Heat map of the optimiser $p_n$ for $n=100$ (the two red lines $y=x\pm 1$ represent the support of the optimal transport $\pi^*$).}
\label{Figure: Discretisation of Example 1}
\end{figure}
\end{example}

\begin{example}
\label{Example: Gaussian 1d}
As a second example, we still consider an MOT problem on the real line with same cost function as in the previous example but with marginals given by two gaussian distributions
	$X\sim N(0,1)$ and $Y\sim N\pp{0,2}$. In this case, we do not know a priori a martingale transport $\tilde{\pi}$ between $\mu$ and $\nu$ in order to start our martingale quantisation scheme. However, we can easily obtain such one by exploiting the fact that if $Z\sim N(0,1)$ independent of $X$, we have that $Y= X + Z$ so that the law of $Y$ is given by the convolution between the law of $X$ and the law of $Z$. A martingale transport can be thus obtained as
	$$
	\tilde{\pi}(dx, dy) = \mu(dx)\eta(x+dy) \coloneqq \int_\bR\delta_{x+z}(dy)\eta(dz) ,
	$$
	where $\eta$ stands for the law of $Z$.
As before, we first apply our martingale quantisation scheme and then solve the corresponding LP.
We choose to quantise $\mu$ using the Voronoi ($L^2$-) quantisation so that $\gp{P^n_{1,i}}$ is given by optimal quantisation grid of the standard normal distribution with size $n$\footnote{Precomputed quantisation grids for $N\pp{0, I_d}$ for $d = 1, \dots, 10$ and $n = 1,\dots,104$ can be. downloaded from \href{www.quantize.maths-fi.com/}{www.quantize.maths-fi.com/}.}. Moreover, choosing $\gp{P^n_{2,j}} =\gp{P^n_{1,i}}$, the expression for $\nu_n$ can be easily obtained from
\begin{align*}
	\omega^n_{Y,i, j} & \coloneqq 
	\tilde{\pi}(P^n_{1,i}\times P^n_{2,j}) 
		 = \int_{P^n_{1,i}} \mu(dx)
		 \int_{P^n_{2,j}} \eta(x+dy) 
		 = \int_{P^n_{1,i}} \mu(dx)
		 \eta(x+P_{2,j}) \\
	y^{n}_{i, j} & = 
	\frac{1}{\omega^n_{Y,i, j}}\int_{P^n_{1,i}\times P^n_{2,j}}y\tilde{\pi}(dx,dy) 
	=\frac{1}{\omega^n_{Y,i, j}} \int_{P^n_{1,i}} \mu(dx)\int_{P^n_{2,j}}\eta(x+dy)y 
\end{align*}
Analogously as before, the results are presented in Figure \ref{Figure: Discretisation of Gaussian 1d}.

\begin{figure}[!htbp]
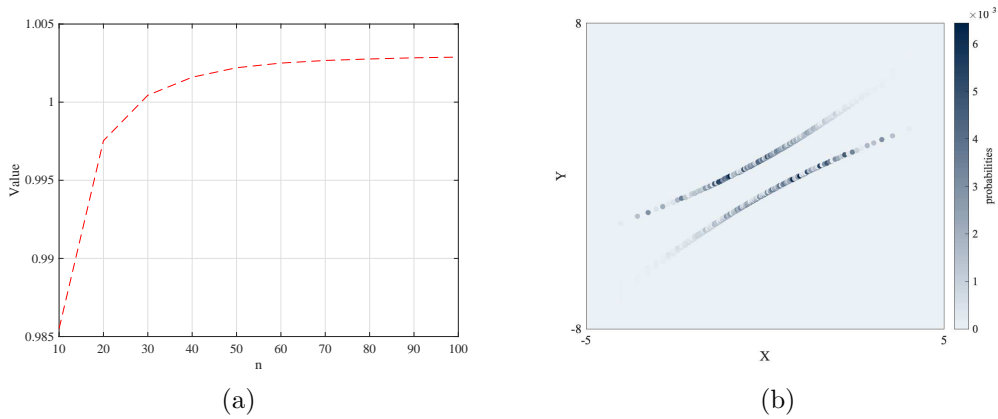

\centering
	\begin{subfigure}[b]{.41\linewidth}
	\includegraphics[width=\linewidth, keepaspectratio]{Example_Gaussian_d_1/Fig_1_inf}
 	\caption{}
 	\label{Figure: Discretisation of Gaussian 1d - Values}
	\end{subfigure}
	\hfil
	\begin{subfigure}[b]{.45\linewidth}
		\includegraphics[width=\linewidth, keepaspectratio]{Example_Gaussian_d_1/Fig_2_inf}
 	\caption{}
 	\label{Figure: Discretisation of Gaussian 1d - Heat map}
 	\end{subfigure}
\caption{Results of Example \ref{Example: Gaussian 1d}:
(a) Values of $P_n $ for $10\leq n \leq 100$;
(b) Heat map of the optimiser $p_n$ for $n=100$.}
\label{Figure: Discretisation of Gaussian 1d}
\end{figure}
\end{example}

\begin{example}
\label{Example_3}
We now consider the equivalent of Example \ref{Example: Example 1} but with marginal distributions supported on $\bR^2$. This MOT problem was also studied in \cite[Example 5.2]{Alfonsi2019}. Consider $\mu, \nu$ be two uniform distribution on $\sq{-1,1}^2$ and $\sq{-2,2}^2$, respectively and the MOT problem with cost function $c(x, y) = 
	\abs{x_1-y_1}^\rho + \abs{x_2-y_2}^\rho$ where $x=\pp{x_1,x_2}, y=\pp{y_1,y_2}$ and $\rho = 2.3$. It can been shown that the martingale optimal transport is given by
	\begin{equation}
		\label{eq: martingale optimal transport Example 3}
	\pi^*\pp{dx,dy} = 
	\mu\pp{dx}\sum_{z}\delta_{\pp{x+z}}\pp{dz}
	\end{equation}
	where $z$ is a Rademacher distribution on $\bR^2$ i.e.~each $z_i$ is such that $z_i = 1$ or $z_i=-1$ with probability $1/2$.
Although it might seem trivial, we used the optimal transport $\pi^*$ as initial transport for the martingale quantisation and then solved the corresponding LP. In Figure \ref{Figure: Example_3}, we exhibit the heat-map of the points $\pp{x_2-x_1,\ y_2-y_1}$ under the optimiser $p_n$ for $n=100 $. The red lines ($y=\pm2, y=x$) represent the theoretical support of the projection of the optimal transport $\pi^*$ on $\pp{x_2-x_1,\ y_2-y_1}$.

\begin{figure}[!htbp]
\centering
	\includegraphics[width=.7\linewidth]{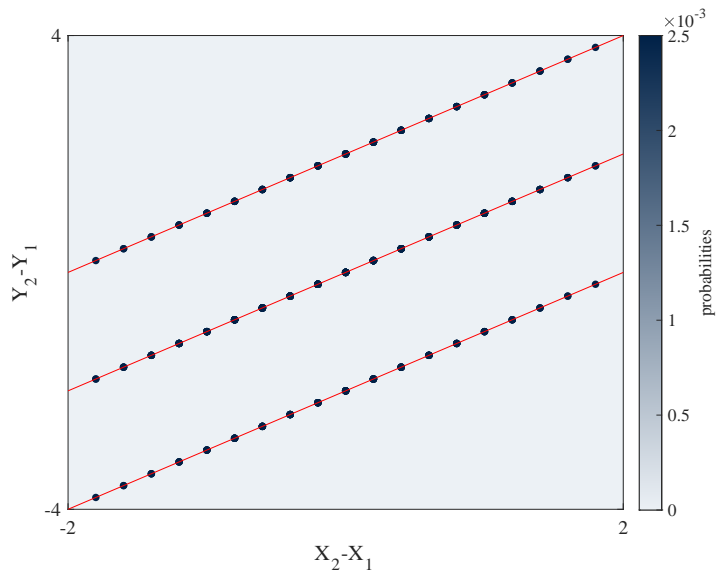}
\caption{Results of Example \ref{Example_3}: heat map of the projection of the optimiser $p_n$ on $\pp{x_2-x_1, y_2-y_1}$ for $n=100$ (the red lines $y=x\pm 2, y=x$ represent the theoretical support).}
\label{Figure: Example_3}
\end{figure}
\end{example}

\section{Barycentric quantisation}
\label{Section: Barycentric quantisation}
In the previous section, we saw how, given $\mu\leqcx \nu$ and partitions $\Pi_1,\Pi_2$ of $B$, we can explicitly build some $\hat{\mu},\hat{\nu}$ as laws of some $\hat{X},\hat{Y}$, which we build starting from $X\sim \mu,Y \sim \nu$ such that $\bE[Y|X]=X$. If for $i=1,2$ we choose $\Pi_i=\Pi_i^n$ for some filtration $(\Pi_i^n)_n$ which generates $\cB(B)$, the above construction provides the martingale quantisation. Since the constructions $(\mu,\nu)\mapsto (\hat{\mu},\hat{\nu})$ and $(X,Y)\mapsto (\hat{X},\hat{Y})$ have proved to be useful, in this section we define them in somewhat more abstract terms as `barycentric quantisations', studying their properties and showing in Proposition \ref{prop: alternative definition barycentric quantisation} and \ref{Proposition: barycentric quantisation span set fint supp measures smaller in convex order} that one can characterise which measures $\zeta$ such that $\zeta\leqcx \gamma$ are finitely-supported (resp.~supported on at most $n$ points) using barycentric quantisations of $\gamma$, but not using only proper barycentric quantisations (see Example \ref{Example: counterexample barycentric quantisation}).

\subsection{Preliminaries on quantisation theory}
Quantisation theory aims to represent a continuous signal with a discrete one trying to loose less information as possible. When the signal is described by a random variable $G$, and $n\in \bN$, a $n$-\emph{quantisation} of $G$ is a $\sigma(G)$-measurable random variable $Z$ which takes at most $n$ values. Equivalently $Z = q(G)$ for some Borel-measurable  function $q$ which takes at most $n$ values, i.e.~$q=\sum_{j=1}^n z_j 1_{P_j}$ for some $z_j\in B$ and a partition $P_1,\ldots, P_n$ of $B$ made of Borel sets.  We refer to \cite{Graf2000} for a comprehensive treatment of the quantisation theory in the finite-dimensional case (also known as vector quantisation theory) and \cite{PagesPrintems2009} for its extension to the infinite-dimensional setting.

If the signal is represented by a probability $\gamma$, we say that a probability $\zeta$ is a $n$-\emph{quantisation} of $\gamma$ if $\zeta=\sum_{j=1}^n \gamma(P_j) \delta_{z_j} $ for some $z_j\in B$ and a partition $P_1,\ldots, P_n$ of $B$ made of Borel sets. 
 
\begin{remark}
\label{re: quantisation of rv and measures}
 These two notions are of course closely linked:
\begin{enumerate}
\item  If $Z$ is a $n$-quantisation of $G$, the law  of $Z$ is a  quantisation of the law of $G$.
\item  If  $\zeta$ is a $n$-quantisation of $\gamma$, then there exist $G$ with law $\gamma$, and\footnote{Just take $Z=q(X)$ for $q=\sum_{j=1}^n z_j 1_{P_j}$, where $z_j,P_j$ are given by the identity $\zeta=\sum_{j=1}^n \gamma(P_j) \delta_{z_j} $.} $Z$ which  is a $n$-quantisation of $G$ and whose law  is $\zeta$.
\end{enumerate} 
\end{remark}

\subsection{Barycentric quantisation}
The  quantisation $\zeta=\sum_{j} \gamma(P_j) \delta_{z_j} $ of $\gamma$  satisfies $\zeta \leqcx \gamma$ if  $z_j$ is  the barycentre  of the probability $\gamma_{j} \coloneqq  \frac{\mathbbm{1}_{P_{j}}}{\gamma\pp{P_{j}}} \cdot \,\gamma$, for each $j$. 
Indeed, Jensen inequality states that  $\delta_{z_j} \leqcx \gamma_{j}$, from which  $\zeta \leqcx \gamma$ follows  trivially by linearity of the integral.
This leads us to the following definition.

\begin{definition}
We will say that $\zeta$ is the \emph{proper barycentric quantisation} of $\gamma \in \cP_1(B)$ with respect to  $\Pi\coloneqq (P_j)_{j=1}^n$ if $\Pi\subseteq \cB(B)$ is a partition of $B$ and $\zeta$ is of the form 
\begin{equation}
\label{eq: proper barycentric quantisation of gamma}
	\zeta  = \sum_{j} \gamma(P_j) \delta_{z_j}, \quad \text{ with } z_j\coloneqq\frac{1}{\gamma(P_j)}\int_{P_j} y \gamma(dy) ,
\end{equation} 
where the sum  runs over those values of $j=1, \ldots, n$ for which $\gamma(P_j)>0$ (and for such values $z_j$ is defined). Notice that in this case $\zeta \in \cP_1(B)$, since $\zeta$ is finitely-supported.
\end{definition}

Consider a probability measure $\gamma\in\cP_1(B)$ and the set 
$$
\cP^{(n)}_{\text{cx}}\pp{\gamma} \coloneqq 
\gp{\zeta \in \cP_1(B): \abs{\text{supp}\pp{\zeta}} \leq n\in\bN,\ \zeta \leqcx\gamma} ,
$$
of probability measures, supported on at most $n\in\bN$ points,  which are smaller than $\gamma$ in convex order. The following example shows that a generic element of $\cP^{(n)}_{\text{cx}}\pp{\gamma}$ cannot be obtained as a quantisation of $\gamma$.
\begin{example}
\label{Example: counterexample barycentric quantisation}
	Consider the following probability measures on $\cB(\bR)$
\begin{align*}
	\mu(dx) &= 
	\frac{1}{4}\delta_{(-1)}(dx)+\frac{1}{2}\delta_{(0)}(dx)+\frac{1}{4}\delta_{(1)}(dx) ,\\
	\nu(dy) &= 
	\frac{1}{2}\pp{\delta_{(-1)}(dy) + \delta_{(1)}(dy)} .
\end{align*}
and the kernel $\kappa_x(dy)$ defined by
$$
\kappa_x(dy) =
\begin{cases}
	\delta_{(-1)}(dy) &
	\text{ if } x = -1\\
	\frac{1}{2}\pp{\delta_{(-1)}(dy) + \delta_{(1)}(dy)} &
	\text{ if } x = 0\\	
	\delta_{(1)}(dy) & 
	\text{ if } x = 1 .
\end{cases}
$$
Since  $\nu$ can be obtained from $\mu$ by splitting the mass concentrated at  $x=0$ and sending it in equal parts to the points $x=\pm1$ (and not moving the mass concentrated at $x=\pm1$) we have that $\nu(dy) =\int_\bR \mu(dx)\kappa_x(dy)$. 
Since the measure $\kappa_x(dy)$ has barycentre $x$ for all $x$, and Jensen inequality gives $\delta_x\leqcx \kappa_x$, integrating in $\mu(dx)$ we get that  $\mu \leqcx \nu$ (by linearity of the integral), and  so $\mu \in\cP^{(3)}_{\text{cx}}\pp{\nu}$.
However, $\mu$ can not be obtained as a quantisation of $\nu$, since $\mu$ is supported on strictly more points than $\nu$.

\end{example}

Luckily, the above example  suggests a construction which allows to obtain all of $\cP^{(n)}_{\text{cx}}\pp{\gamma}$. Indeed, notice that if 
$$\pi(dx,dy)\coloneqq \mu(dx)\kappa_x(dy), \quad P_{1}=(-\infty,0),\quad P_2=\{0\}, \quad P_3=(0,\infty),$$
then $\mu=\sum_{j=1}^3 \pi_j \delta_{y_j}$, where $\pi_j\coloneqq \pi(P_j \times \bR) =\mu(P_j)$ and $y_1=\{-1\},y_2=\{0\},y_3=\{1\}$ can be expressed as 
$$y_j\coloneqq \frac{1}{\pi_{j}}\int_{P_{j}\times \bR }y\,\pi\pp{dx, dy}, \quad  j=1,2,3.$$
This leads us to the following definition.
\begin{definition}
\label{Definition: barycentric quantisation measures}
Given  $\gamma, \zeta \in \cP_1(B)$, with $\zeta$ finitely-supported (and thus of the form $\zeta  = \sum_{i=1}^n w_i \delta_{z_i}$), we will call $\zeta$ the  \emph{barycentric quantisation}  of $\gamma$ with respect to $\pp{\pi, \Pi_1, \Pi_2}$ if  $\pi$ is a probability on $B\times B$ whose second marginal is $\gamma$ and  $\zeta$  is of the form 
\begin{equation}
\label{eq: barycentric quantisation of gamma}
	\zeta = \sum_{i\in I, j\in J: \pi_{i, j}>0}\pi_{i, j}\delta_{y_{i, j}} ,
\end{equation} 
where $\pi_{i, j}\coloneqq\pi\pp{P_{1,i}\times P_{2,j}}$ for all $i\in i, j\in J$, and 
\begin{align}
\label{eq: y bar pi}
y_{i, j} \coloneqq 
\frac{1}{\pi_{i, j}}\int_{P_{1,i} \times P_{2,j} }y\,\pi\pp{dx, dy} , \text{ for } i\in I ,j \in J: \pi_{i, j}>0,
\end{align}
for some finite partitions $\Pi_1\coloneqq(P_{1,i})_{i\in I}, \Pi_2\coloneqq(P_{2,j})_{j \in J} \subseteq \cB(B)$ of $B$. 
\end{definition}
As will we see in Remark \ref{rem: bar q generalises proper bar q},  the concept of  barycentric quantisation generalises that of proper  barycentric quantisation. The two concepts are not equivalent since, as we saw, $\mu$ in Example \ref{Example: counterexample barycentric quantisation} is a barycentric quantisation of $\nu$, but it is not a proper barycentric quantisation of $\nu$.

In Remark \ref{re: quantisation of rv and measures} we saw that the notion of quantisation for measures admits a corresponding notion of quantisation for random variables. As we will now see, the same is true for the barycentric quantisation; this link is worth developing because, even if ultimately we are interested in the measures themselves, in proofs it is often easier to work with random variables. This is of course the same reason why Strassen's Theorem, and Skorokhod's representation Theorem, are so useful. 

We will need  the following simple  result.
\begin{lemma}
\label{le: auxiliary lemma}
$H \subseteq \cF$ is a finite partition of $\Omega$ if and only if $H=W^{-1}(\Pi)$ for some finite partition $\Pi$ of $B$ and $B$-valued random variable $W$. It is always possible to choose $\Pi, W$ such that $W$ takes exactly one value in each element of $\Pi$; in this case $H$ and $\Pi$ have the same number of elements, and $H$ is the family of sets of the form $\{W=w\}$, where $w$ ranges over the image of $W$.

If $G$ is a $B$-valued random variable which takes finitely many values and $H \subseteq \cF$ a finite partition of $\Omega$, then $H \subseteq \sigma(G)$ if and only if $H=G^{-1}(\Pi)$ for some finite partition $\Pi \subseteq \cB(B)$ of $B$.
\end{lemma}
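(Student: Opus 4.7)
The plan is to prove both parts by giving explicit constructions. Throughout, the $(\Leftarrow)$ direction of each equivalence is essentially automatic: if $H = W^{-1}(\Pi)$, then $H \subseteq \cF$ by measurability of $W$, and $H$ inherits the partition property from $\Pi$ because preimages preserve disjointness and covers; likewise, in the second part, if $H = G^{-1}(\Pi)$ then each $H_i = G^{-1}(P_i) \in \sigma(G)$.

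For the $(\Rightarrow)$ direction of the first part, I would pick $n$ distinct points $w_1, \ldots, w_n \in B$ (which exist since any Banach space is infinite), set $W \coloneqq \sum_{i=1}^n w_i \mathbbm{1}_{H_i}$, and choose the partition
\begin{equation*}
\Pi \coloneqq \bigl\{\{w_1\}, \ldots, \{w_{n-1}\}, B \setminus \{w_1, \ldots, w_{n-1}\}\bigr\} \subseteq \cB(B).
\end{equation*}
Then $W$ is $\cF$-measurable (preimages of Borel sets are unions of elements of $H$), and by construction $W^{-1}(\{w_i\}) = H_i$ for $i < n$ while $W^{-1}(B \setminus \{w_1, \ldots, w_{n-1}\}) = H_n$. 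Hence $W$ takes exactly one value on each element of $\Pi$, which gives $\#H = \#\Pi$ and the characterisation $H = \{\{W = w\} : w \in W(\Omega)\}$.

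For the $(\Rightarrow)$ direction of the second part, let $g_1, \ldots, g_m$ enumerate the image of $G$. The (non-empty) atoms of $\sigma(G)$ are the sets $\{G = g_k\}$, and since $H \subseteq \sigma(G)$ each $H_i$ is a disjoint union of such atoms, say $H_i = \bigcup_{k \in K_i} \{G = g_k\}$ where $(K_i)$ partitions $\{1, \ldots, m\}$ since $H$ partitions $\Omega$. Setting $P_i \coloneqq \{g_k : k \in K_i\}$ yields disjoint Borel subsets of $B$ with $G^{-1}(P_i) = H_i$; to turn $(P_i)$ into a partition of $B$ I would enlarge one of them, say $P_1$, by absorbing $B \setminus \bigcup_i P_i$, which does not affect $G^{-1}(P_1)$ since $G$ takes no values outside $\bigcup_i P_i$.

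The main (minor) obstacle is essentially book-keeping: making sure $\Pi$ is a Borel partition of \emph{all} of $B$ rather than just of the image of the random variable. In both constructions this is handled in the same way, by dumping the leftover points of $B$ into one designated cell of $\Pi$, using the fact that the random variable in question never probes this leftover region.
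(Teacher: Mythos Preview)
Your proof is correct and follows essentially the same approach as the paper's: both construct $W$ as $\sum_i w_i \mathbbm{1}_{H_i}$ with distinct $w_i$ and choose $\Pi$ to be singletons plus a leftover cell, and both handle the second part by writing each $H_i$ as a union of atoms $\{G=g_k\}$ and then absorbing the complement of $\mathrm{Im}(G)$ into one cell of $\Pi$. Your version is arguably slightly cleaner in that you observe directly that the index sets $K_i$ already partition $\{1,\ldots,m\}$ (since $H$ partitions $\Omega$), whereas the paper redundantly disjointifies the $S_i$ before adding the leftover.
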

\begin{proof}
Trivially, if $\Pi$ is a finite partition of $B$ and $W$ a $B$-valued random variable then $H\coloneqq W^{-1}(\Pi)$ is a partition of $\Omega$, and if $W$ takes exactly one value in each element of $\Pi$ then  $H=\{\{W=w\}: w\in Im(W)\}$, and so $H$ has as many elements as $\Pi$.

Conversely, given a partition $H=\{H_i\}_{i=1}^n \subseteq \cF$ of $\Omega$, one can build  as follows a finite partition $\Pi$ of $B$ and a $W$ which takes exactly one value in each element of $\Pi$ and is such that $H=W^{-1}(\Pi)$: choose distinct points $b_1, \ldots, b_n\in B$ and a partition  $\Pi\coloneqq\{P_{i}\}_{i=1}^n \subseteq \cB(B)$ of $B$ such that $b_i\in P_{i}$ for all $i$ (for example one can take $P_{i}=\{b_i\}$ for $i<n$, and $P_{n}=B\setminus \{b_1, \ldots, b_{n-1}\}$), and define $W\coloneqq\sum_{i=1}^n b_i 1_{G_i}$. 

Now  consider the second statement: let $G$ takes finitely-many values and $H \subseteq \cF$ be a finite partition. If $H=G^{-1}(\Pi)$ for some finite partition $\Pi$ of $B$ then   $H \subseteq \sigma(G)$ follows from $\Pi \subseteq \cB(B)$.  Conversely, if $H \subseteq \sigma(G)$ then each element $H_i$ of $H=\{H_i\}_{i=1}^n$ is a union of atoms of $\sigma(G)$, and thus is of the form $\cup_{g\in S_i}\{G=g\}= \{G\in S_i\}$ for some $S_i \subseteq Im(G)$. Since $H_i \cap H_j=\emptyset$ for $i\neq j$, if $P_i\coloneqq S_i \setminus \cup_{j<i} S_j$ for $i<n$ and $P_n=B \setminus \cup_{i<n} P_i$ then $\{G\in S_i\}= \{G\in P_i\}$,  and so  $\Pi\coloneqq \{P_i\}_{i=1}^n$ is a partition of $B$ such that $H=G^{-1}(\Pi)$.
\end{proof}
 
\begin{definition}
\label{Definition: barycentric quantisation rvs}
Given $Z,G\in L^1\pp{\bP, B}$, $n\in \bN$, we will call $Z$ a \emph{barycentric quantisation} of $G$ with respect to $\cG$ if $Z$ is of the form 
\begin{equation}
\label{eq: barycentric quantisation of G}
	Z=\bE\sq{G\vert\cG} ,
\end{equation}
for some finite $\sigma$-algebra $\cG\subseteq \cF$. We will say that a  barycentric quantisation  $Z$ of $G$ with respect to $\cG$ is \emph{proper} if $\cG \subseteq \sigma(G)$.  
\end{definition} 

\begin{remark}
\label{re: equiv bar quant rv}
If $Z$ a barycentric quantisation of $G$ with respect to $\cG$ then it is also a barycentric quantisation of $G$ with respect to $\sigma(Z)$ since
$$Z=  \bE[Z \vert \sigma(Z)]= \bE[\bE[G\vert \cG ] \vert \sigma(Z)]= \bE[G \vert \sigma(Z)],$$ 
Since $Z$ takes\footnote{Of course $Z$ is an equivalence class, so by `$Z$ takes at most $n$ values' we mean that it admits a representative which takes at most $n$ values, or equivalently that the   law of $Z$ is supported on at most $n$ points.}
at most as many values as the number of atoms of $\cG$ (because $Z=\bE[G\vert \cG ]$ is constant on each atom of $\cG$), $\sigma(Z)$  has at most as many  atoms as $\cG$ (since the atoms of $\sigma(Z)$ are the sets of the form $\{Z=z\}$, for $z$ in the support of $\cL(Z)$).
\end{remark} 
The two definitions of barycentric quantisation (for probabilities on $B$, and for $B$-valued random variables) are strictly connected, as we now explain  first considering a general barycentric quantisation, and then considering a proper one. 

\begin{remark}
\label{re: quant for rv vs for measures}
Let $Z$ be the barycentric quantisation of $G$ with respect to the  finite $\sigma$-algebra $\cG\subseteq \cF$.
If $H$ is the set of atoms of $\cG$, Lemma \ref{le: auxiliary lemma} gives that $H=W^{-1}(\Pi)$ for some $\Pi$ and  $W$, so $\cG=\sigma(W^{-1}(\Pi))$, and so by Lemma \ref{le: law of Z} the law $\zeta$ of $Z$ is a  barycentric quantisation of the law $\gamma$ of $G$ with respect to $(\pi, \Pi,\{B\})$, where $\pi$ is the law of $(W,G)$.
Moreover, when we applied Lemma \ref{le: auxiliary lemma} to get $\Pi$ and $W$, we could have chosen to have that $W$ takes one value in each element of $\Pi$ and thus $\Pi$ has as many element as the atoms of $\cG$.

Conversely, let $\zeta$ be a barycentric quantisation of $\gamma$ with respect to $\pp{\pi,\Pi_1,\Pi_2}$. 
If $X,G$ are the projections on the first and second coordinates, defined on the probability space $B \times B$  endowed with the probability $\pi$, and $Z\coloneqq \bE[G|\cG]$ where $\cG\coloneqq\sigma( \{P_{1,i} \times P_{2,j}\}_{i, j})$, then trivially $G$ has law $\gamma$ and $Z$ is a barycentric quantisation of $G$ with respect to $\cG$; moreover  $Z$ has law $\zeta$, since 
$$Z=\sum_{i,j} 1_{P_{i,j}} \frac{\bE[1_{P_{1,i}\times P_{2,j}}G]}{\bP(P_{1,i}\times P_{2,j})}=\sum_{i,j} 1_{P_{i,j}} \frac{\bE[1_{P_{1,i}}(X) 1_{P_{2,j}}(G) G]}{\bP(P_{1,i}\times P_{2,j})}=\sum_{i,j} 1_{P_{i,j}} y_{i,j},$$
where $y_{i,j}$ are as in (\ref{eq: y bar pi}).
\end{remark} 

\begin{remark}
\label{re: proper quant for rv vs for measures}
If $Z$ is a \emph{proper} barycentric quantisation of $G$ with respect to $\cG$ then, calling $H$ the family of atoms of $\cG$, we can write  $\cG=\sigma(G^{-1}(\Pi))$ for some partition $\Pi\subseteq \cB(B)$ of $B$ using  Lemma \ref{le: auxiliary lemma}, and then the law $\zeta$ of $Z$  is a  barycentric quantisation of the law $\gamma$ of $G$  with respect to  $\Pi$ by Lemma \ref{le: law of Z}.

Conversely, if $\zeta$ is a \emph{proper} barycentric quantisation of $\gamma$ with respect to the partition $\Pi\subseteq \cB(B)$ of $B$, and $G$ has  law $\gamma$, then $Z\coloneqq \bE[G|\sigma(G^{-1}(\Pi))]$ is  a \emph{proper} barycentric quantisation of $G$ with respect to $\cG\coloneqq \sigma(G^{-1}(\Pi))$ since $\sigma(G^{-1}(\Pi))\subseteq \sigma(G)$, and $Z$ has law  $\zeta$ by Lemma \ref{le: law of Z}. 
\end{remark} 

\begin{remark}
\label{thm: estimates speed quant bar}
Remarks \ref{re: quant for rv vs for measures} and \ref{re: proper quant for rv vs for measures} imply that the estimates of the speed of convergence provided in Theorem \ref{th: speed of convergence} hold whenever $\hat{\nu}/\hat{Y}$ is a barycentric quantisation (resp.~$\hat{\mu}/\hat{X}$ is a proper barycentric quantisation) of $\nu/Y$ (resp.~of $\mu/X$). Since any \emph{proper} barycentric  quantisation is a barycentric  quantisation, the last lines of the proof of Theorem \ref{th: speed of convergence} should now appear obvious. 
\end{remark}

\begin{theorem}
\label{prop: alternative definition barycentric quantisation}
If $\zeta,\gamma \in \cP_1(B)$ then t.f.a.e.:
\begin{enumerate}
\item \label{it: cx} $\zeta \leqcx \gamma$ and $\zeta$ is finitely-supported
\item \label{it: generic bq} $\zeta$ is a barycentric quantisation of $\gamma$ with respect to $\pp{\pi,\Pi_1,\Pi_2}$, for some $\pi\in \cP_1(B)$ with second marginal $\gamma$ and $\Pi_1,\Pi_2\subseteq \cB(B)$ finite partitions of $B$.
\item \label{it:  bq trivial Pi_1} $\zeta$ is a barycentric quantisation of $\gamma$ with respect to $\pp{\pi,\Pi_1,\{B\}}$, for some $\pi\in \cP_1(B)$ with second marginal $\gamma$ and $\Pi_1\subseteq \cB(B)$ finite partition of $B$.
\end{enumerate} 
\end{theorem}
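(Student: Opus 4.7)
The plan is to prove the implications $(\ref{it:  bq trivial Pi_1}) \Rightarrow (\ref{it: generic bq}) \Rightarrow (\ref{it: cx}) \Rightarrow (\ref{it:  bq trivial Pi_1})$. The first implication is immediate since $(\ref{it:  bq trivial Pi_1})$ is literally the special case of $(\ref{it: generic bq})$ obtained by taking $\Pi_2=\{B\}$, so there is nothing to prove.

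For $(\ref{it: generic bq}) \Rightarrow (\ref{it: cx})$, I would pass from measures to random variables using Remark \ref{re: quant for rv vs for measures}: if $\zeta$ is a barycentric quantisation of $\gamma$ with respect to $(\pi,\Pi_1,\Pi_2)$, we can realise $\zeta$ as the law of $Z=\bE[G\mid\cG]$ where $G\sim\gamma$ and $\cG=\sigma(\{P_{1,i}\times P_{2,j}\}_{i,j})$ on the probability space $(B\times B,\pi)$. Since $\cG$ is generated by a finite partition, $Z$ takes at most $\#\Pi_1\cdot\#\Pi_2$ values, so $\zeta$ is finitely-supported. For the convex order, the conditional Jensen inequality gives $f(Z)=f(\bE[G\mid\cG])\leq \bE[f(G)\mid\cG]$ for every convex (Lipschitz) $f:B\to\bR$, and taking expectations yields $\int f\,d\zeta \leq \int f\,d\gamma$, i.e.~$\zeta\leqcx\gamma$.

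The non-trivial implication is $(\ref{it: cx}) \Rightarrow (\ref{it:  bq trivial Pi_1})$, and this will use Strassen's Theorem in a crucial way. Given $\zeta\leqcx\gamma$ with $\zeta=\sum_{i=1}^n w_i\delta_{z_i}$ finitely-supported (with $w_i>0$ and distinct $z_i$), Theorem \ref{th: Strassen} provides a martingale coupling $\pi\in\cM(\zeta,\gamma)$, so $\pi$ has first marginal $\zeta$, second marginal $\gamma$, and $\bE^\pi[Y\mid X]=X$. I choose any finite Borel partition $\Pi_1=(P_{1,i})_{i=1}^n$ of $B$ that separates the support of $\zeta$, i.e.~such that $z_i\in P_{1,i}$ and $z_k\notin P_{1,i}$ for $k\neq i$ (for instance $P_{1,i}=\{z_i\}$ for $i<n$ and $P_{1,n}=B\setminus\{z_1,\dots,z_{n-1}\}$). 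Then the first marginal of $\pi$, being $\zeta$, charges $P_{1,i}$ with mass exactly $w_i$, so $\pi(P_{1,i}\times B)=w_i$, and
\[
\frac{1}{\pi(P_{1,i}\times B)}\int_{P_{1,i}\times B}y\,\pi(dx,dy)
=\bE^\pi[Y\mid X\in P_{1,i}]
=\bE^\pi[X\mid X\in P_{1,i}]
=z_i,
\]
where the middle equality uses the martingale property $\bE^\pi[Y\mid X]=X$ and the last equality uses that $X=z_i$ $\pi$-almost surely on $\{X\in P_{1,i}\}$. Plugging into Definition \ref{Definition: barycentric quantisation measures} with $\Pi_2=\{B\}$ reproduces $\zeta=\sum_i w_i\delta_{z_i}$, giving $(\ref{it:  bq trivial Pi_1})$.

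The main obstacle is that $(\ref{it: cx}) \Rightarrow (\ref{it:  bq trivial Pi_1})$ requires producing $\pi$ from scratch; I expect this to go smoothly by invoking Strassen's Theorem rather than by any direct construction, which underscores why the \emph{general} barycentric quantisation (unlike the proper one) can represent every finitely-supported $\zeta\leqcx\gamma$: the extra freedom comes precisely from choosing a martingale coupling whose first marginal is $\zeta$ itself, rather than being constrained to the form $\bE[G\mid\sigma(G^{-1}(\Pi))]$ of a proper quantisation.
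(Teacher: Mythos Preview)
Your proposal is correct and follows essentially the same route as the paper: the cycle $(\ref{it:  bq trivial Pi_1})\Rightarrow(\ref{it: generic bq})\Rightarrow(\ref{it: cx})\Rightarrow(\ref{it:  bq trivial Pi_1})$, with the first implication trivial, the second via Remark~\ref{re: quant for rv vs for measures} and conditional Jensen (which is the ``easy half'' of Strassen the paper cites), and the third via the hard half of Strassen's theorem. The only cosmetic difference is that for $(\ref{it: cx})\Rightarrow(\ref{it:  bq trivial Pi_1})$ the paper stays at the level of random variables (obtaining $Z,G$ with $\bE[G\mid Z]=Z$ and then invoking Remark~\ref{re: quant for rv vs for measures} to produce $\pi,\Pi_1$), whereas you work directly with the martingale coupling $\pi\in\cM(\zeta,\gamma)$ and verify the barycentre identities by hand; your computation is exactly what Remark~\ref{re: quant for rv vs for measures} unpacks to in this case.
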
 
\begin{proof}
Trivially, item \ref{it:  bq trivial Pi_1}  implies  item \ref{it: generic bq}. If item \ref{it: generic bq} holds, by item \ref{re: quant for rv vs for measures} there exist $Z\sim \zeta, G \sim \gamma$ and a finite $\sigma$-algebra $\cG\subseteq \cF$   such that $\bE[G|\cG]=Z$, and so item \ref{it: cx} follows from the easy half of Strassen's theorem (Theorem \ref{th: Strassen}). Finally, if item \ref{it: cx} holds then the hard half of Strassen's theorem implies the existence of $Z\sim \zeta, G \sim \gamma$ such that $\bE[G|Z]=Z$; since $\zeta$ is finitely supported, $\cG\coloneqq \sigma(Z)$ is finite and so by Remark \ref{re: quant for rv vs for measures} $\zeta$ is a barycentric quantisation of $\gamma$ with respect to $(\pi,\Pi_1,\{B\})$ for some $\pi,\Pi_1$.
\end{proof} 

\begin{remark}
\label{rem: bar q generalises proper bar q}
	Notice that the Proposition \ref{prop: alternative definition barycentric quantisation} shows that, in the definition of barycentric quantisation of $\gamma$, we could equivalently have demanded that the first marginal of $\pi$ has finite support and $\Pi_2=\{B\}$. However, doing this is not a good idea. Indeed, 	allowing for a more general $\Pi_2$ allows to more easily to consider settings which naturally involve two partitions, for example making it clear that if $\zeta$ is a \emph{proper}  barycentric quantisation of $\gamma$ then $\zeta$ is a  barycentric quantisation of $\gamma$; moreover, the choice of $\Pi_2$ strongly affect the upper bounds we discussed in Theorem \ref{thm: estimates speed quant bar}, so in this regard $\Pi_2$ should rather be chosen to have sets as small (in diameter) as possible, instead of being composed of only one set $B$ (of infinite diameter).
\end{remark}

The next proposition shows that the situation of Example \ref{Example: counterexample barycentric quantisation} is standard, and makes Proposition \ref{prop: alternative definition barycentric quantisation} slightly more precise, as it keeps track of the number of points in the support of a finitely-supported measure.
\begin{proposition}
\label{Proposition: barycentric quantisation span set fint supp measures smaller in convex order}
The set $\cP^{(n)}_{\text{cx}}\pp{\gamma}$ coincides with the set of barycentric quantisations of $\gamma$ with respect to $(\pi,\Pi_1,\{B\})$, where $\Pi_1$ spans all partitions of $B$ made of at most $n$ elements and $\pi$ spans all probabilities on $B \times B$ whose second marginal is $\gamma$.
\end{proposition}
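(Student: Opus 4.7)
The plan is to prove the two set inclusions separately. Both rely on Proposition \ref{prop: alternative definition barycentric quantisation} together with the translation between random-variable and measure-level barycentric quantisations developed in Remark \ref{re: quant for rv vs for measures} and Lemma \ref{le: auxiliary lemma}.

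For the inclusion ($\supseteq$), take any $\zeta$ which is a barycentric quantisation of $\gamma$ with respect to $(\pi,\Pi_1,\{B\})$ where $|\Pi_1|\leq n$. Substituting $\Pi_2=\{B\}$ in Definition \ref{Definition: barycentric quantisation measures} gives
$$
\zeta=\sum_{i:\pi(P_{1,i}\times B)>0}\pi(P_{1,i}\times B)\,\delta_{y_i},
$$
so $\zeta$ is supported on at most $|\Pi_1|\leq n$ points. Proposition \ref{prop: alternative definition barycentric quantisation} then yields $\zeta\leqcx\gamma$, hence $\zeta\in\cP^{(n)}_{\text{cx}}(\gamma)$.

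For the inclusion ($\subseteq$), take $\zeta\in\cP^{(n)}_{\text{cx}}(\gamma)$. Since $\zeta\leqcx\gamma$, the hard half of Strassen's Theorem (Theorem \ref{th: Strassen}) yields $Z\sim\zeta$ and $G\sim\gamma$ with $\bE[G|Z]=Z$, i.e.~$Z=\bE[G|\sigma(Z)]$ is a proper barycentric quantisation of $G$. The atoms of $\sigma(Z)$ are the level sets $\{Z=z\}$ as $z$ ranges over $\text{supp}(\zeta)$, so $\sigma(Z)$ has at most $n$ atoms. Applying the first part of Lemma \ref{le: auxiliary lemma} to this finite partition $H$ of $\Omega$, I obtain a finite Borel partition $\Pi_1$ of $B$ and a $B$-valued random variable $W$ such that $W$ takes exactly one value on each element of $\Pi_1$ and $H=W^{-1}(\Pi_1)$; this forces $|\Pi_1|=|H|\leq n$. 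Setting $\pi\coloneqq\cL(W,G)$, the argument in Remark \ref{re: quant for rv vs for measures} (which relies on Lemma \ref{le: law of Z}) shows that $\zeta=\cL(Z)$ is a barycentric quantisation of $\gamma$ with respect to $(\pi,\Pi_1,\{B\})$, as required.

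The main (and frankly minor) obstacle is the bookkeeping ensuring $|\Pi_1|\leq n$. The key observation is that Lemma \ref{le: auxiliary lemma} lets us choose the representative pair $(W,\Pi_1)$ of a given finite partition of $\Omega$ with $\Pi_1$ having \emph{exactly} as many elements as that partition has atoms; since the atoms of $\sigma(Z)$ are in bijection with $\text{supp}(\zeta)$, the desired bound $|\Pi_1|\leq n$ follows automatically. Everything else reduces to citing Proposition \ref{prop: alternative definition barycentric quantisation} and Strassen's Theorem.
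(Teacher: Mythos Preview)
Your proof is correct and follows essentially the same route as the paper's. The only cosmetic difference is in the converse inclusion: the paper takes $W=Z$ and writes down the partition $\Pi_1$ explicitly as $P_i=\{z_i\}$ for $i<k$ and $P_k=B\setminus\{z_1,\dots,z_{k-1}\}$ (so that $\pi=\cL(Z,G)$), whereas you invoke Lemma \ref{le: auxiliary lemma} abstractly to produce some $(W,\Pi_1)$ with the right cardinality and set $\pi=\cL(W,G)$; both are instances of the same argument via Remark \ref{re: quant for rv vs for measures}.
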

\begin{proof}
We first show that the stated barycentric quantisations are in $\cP^{(n)}_{\text{cx}}\pp{\gamma}$; then we prove the converse, i.e.~that every $\zeta\in \cP^{(n)}_{\text{cx}}\pp{\gamma}$ comes from a barycentric quantisation of the kind considered in the statement.

If $\zeta$ is a barycentric quantisations of $\gamma$ with respect to $(\pi,\Pi_1,\{B\})$, by definition the number of points in the support of $\zeta$ equals the number of $P_i\in \Pi_1$ such that $\pi(P_i \times B)>0$; in particular, if $\Pi_1$ has at most $n$ elements then  $\zeta$ is supported on at most $n$ points. Since Proposition \ref{prop: alternative definition barycentric quantisation} shows that $\zeta\leqcx \gamma$, we proved $\zeta\in \cP^{(n)}_{\text{cx}}\pp{\gamma}$. 

Conversely, if $\zeta \in \cP^{(n)}_{\text{cx}}\pp{\gamma}$, by Strassen's Theorem (i.e.~Theorem \ref{th: Strassen})   there exists  $G, Z$ with laws  $\gamma, \zeta$ and such that $\bE\sq{G\vert Z} = Z$, so $Z$ is a barycentric quantisation of $G$ with respect to $\cG\coloneqq \sigma(Z)$. 
If $\{z_i\}_{i=1}^k$ is the support of $\zeta$ (so that $k\leq n$), the set of atoms $\{Z=z_i\}_{i=1}^k$ of $\cG$  equals $Z^{-1}(\Pi_1)$, where $\Pi_1=(P_i)_{i=1}^k$ is the partition of $B$ given by $P_i\coloneqq \{z_i\}$ for $i<k$, and $P_k\coloneqq B \setminus \cup_{i<k} P_i$.  Thus   by Lemma \ref{le: law of Z} the law $\zeta$ of $Z$ is a  barycentric quantisation of the law $\gamma$ of $G$ with respect to $(\pi, \Pi_1,\{B\})$, where $\pi$ is the law of $(Z,G)$.
Since $\pi$ has second marginal $\gamma$ and $\Pi_1$ has $k\leq n$ elements, the proof is concluded.
\end{proof}
Given the link between barycentric quantisations for measures and for random variables expressed in Remark \ref{re: quant for rv vs for measures}, it is natural to expect a version of Proposition \ref{prop: alternative definition barycentric quantisation} for random variables. The reason why such version is not immediately obvious, is that we have so far not seen a  definition of convex order for random variables analogous to that for measures, so we now introduce it.
\begin{definition}
\label{def: convex order for rv}
Given $Z,G\in L^1(B,\bP)$, we say that $Z \leqcx G$ if there exists a $\sigma$-algebra $\cG \subseteq \cF$ such that  $f(Z)\leq \bE[f(G)|\cG]$ holds for every  $f:B\to \bR$ Lipschitz and convex. 
\end{definition} 
The following simple fact is the analogue for random variables of the fact that $\mu \leqcx \nu$ if and only if there exists a kernel $K$ such that $\nu=\mu K$ and $\text{bar}(K_x)=x$ for $\mu$ a.e.~$x$.
\begin{lemma}
\label{thm: bar quant rv span all rv smaller in conver order}
Given $Z,G\in L^1(B,\bP)$, t.f.a.e.:
\begin{enumerate}
\item \label{it: ineq} $Z \leqcx G$
\item \label{it: ce} $Z=\bE[G |\cG]$ for some $\sigma$-algebra $\cG \subseteq \cF$
\end{enumerate} 
If the above conditions hold, then $f(Z)\leq \bE[f(G)|Z]$ holds for every  $f:B\to \bR$ Lipschitz and convex, and in particular $Z=\bE[G |Z]$.
\end{lemma}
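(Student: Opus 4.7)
The plan is to prove the equivalence by proving both directions separately, and then derive the additional claim as a corollary of the argument for the harder direction.

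For (\ref{it: ce}) $\Rightarrow$ (\ref{it: ineq}), I would appeal directly to Jensen's inequality for Bochner conditional expectations (as developed in \cite[Section 5.3]{Str10}): if $Z = \bE[G|\cG]$ and $f:B\to\bR$ is continuous convex with $f(G)\in L^1$, then $f(Z) \leq \bE[f(G)|\cG]$. The Lipschitz hypothesis on $f$, combined with $G\in L^1$, ensures $f(G)\in L^1$, so the same $\cG$ witnesses $Z\leqcx G$.

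The interesting direction (\ref{it: ineq}) $\Rightarrow$ (\ref{it: ce}) rests on testing the defining inequality against \emph{affine} functions. For each $\phi\in B^*$ and $c\in\bR$, the map $x\mapsto \phi(x)+c$ is Lipschitz convex, and so is its negative. Applying the hypothesis to both therefore yields an \emph{equality} $\phi(Z) = \bE[\phi(G)|\cG]$. Since bounded linear functionals commute with Bochner conditional expectation, the right-hand side equals $\phi(\bE[G|\cG])$, so $\phi(Z - \bE[G|\cG]) = 0$ a.s.~for every $\phi\in B^*$. Because $B$ is separable, it admits a countable point-separating family in $B^*$; intersecting countably many null sets gives $Z = \bE[G|\cG]$ a.s.

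For the final claim, note that from $Z=\bE[G|\cG]$ we get $\sigma(Z)\subseteq \cG$ (up to null sets). Then, for any Lipschitz convex $f$, Jensen's inequality followed by the tower property gives
\[
f(Z) = \bE[f(Z)|Z] \leq \bE\bigl[\bE[f(G)|\cG]\,\big|\,Z\bigr] = \bE[f(G)|Z].
\]
Specialising to $f=\pm(\phi+c)$ as above yields $\phi(Z)=\bE[\phi(G)|Z]$ for every $\phi\in B^*$, and the Hahn--Banach/separability argument used before gives $Z=\bE[G|Z]$.

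The only real subtlety --- which I would flag but treat as routine --- is the passage from the scalar identities $\phi(Z)=\phi(\bE[G|\cG])$ to the $B$-valued identity $Z=\bE[G|\cG]$. This is where separability of $B$ is used; everything else is a direct application of Jensen plus the observation that taking $\pm f$ for affine $f$ turns the defining inequality into an equality.
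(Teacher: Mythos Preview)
Your proof is correct and follows essentially the same approach as the paper's own proof: Jensen for (\ref{it: ce})$\Rightarrow$(\ref{it: ineq}), testing against $\pm$(affine) functions for (\ref{it: ineq})$\Rightarrow$(\ref{it: ce}), and the tower property for the final claim. In fact you are more careful than the paper in the direction (\ref{it: ineq})$\Rightarrow$(\ref{it: ce}): the paper simply writes ``apply the inequality to $f(x)=x$ and $f(x)=-x$'', which is literally only correct for $B=\bR$, whereas you correctly test against $\pm\phi$ for $\phi\in B^*$ and invoke separability to pass from the scalar identities to the $B$-valued one.
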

\begin{proof}
Assume item \ref{it: ce} holds. The conditional Jensen inequality implies item \ref{it: ineq}, with the same $\cG$; since the tower property of conditional expectation shows that $Z=\bE[G |Z]$,  one can take w.l.o.g. $\cG=\sigma(Z)$ also in (the definition used in) item \ref{it: ineq}. Conversely, applying the inequality $f(Z)\leq \bE[f(G)|\cG]$ to $f(x)=x$ and to $f(x)=-x$, gives $Z=\bE[G |\cG]$.
\end{proof}  
Analogously we can state the equivalent of  Proposition \ref{prop: alternative definition barycentric quantisation} for random variables.
\begin{corollary}
\label{co: bar quant rv}
Given $Z,G\in L^1(B,\bP)$, t.f.a.e.:
\begin{enumerate}
\item \label{it: fin ineq}  $Z \leqcx G$ and $Z$ only takes finitely many values
\item \label{it: fin ce} $Z$ is barycentric quantisation of $G$
\end{enumerate}
\end{corollary}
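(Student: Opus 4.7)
The plan is to derive this corollary essentially as a direct consequence of Lemma \ref{thm: bar quant rv span all rv smaller in conver order}, with the only additional ingredient being careful bookkeeping about finiteness of the $\sigma$-algebra involved.

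For the implication \ref{it: fin ce} $\Rightarrow$ \ref{it: fin ineq}: suppose $Z=\bE[G\vert \cG]$ for a finite $\sigma$-algebra $\cG\subseteq \cF$. Lemma \ref{thm: bar quant rv span all rv smaller in conver order} (item \ref{it: ce} $\Rightarrow$ item \ref{it: ineq}) immediately yields $Z\leqcx G$. Moreover, since a conditional expectation with respect to a finite $\sigma$-algebra is constant on each of its (finitely many) atoms, $Z$ can take at most $\#\cG$ distinct values, hence only finitely many.

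For the converse \ref{it: fin ineq} $\Rightarrow$ \ref{it: fin ce}: suppose $Z\leqcx G$ with $Z$ taking only finitely many values. By Lemma \ref{thm: bar quant rv span all rv smaller in conver order} (and the last statement therein), we may write $Z=\bE[G\vert \sigma(Z)]$. Since $Z$ takes only finitely many values, $\sigma(Z)$ is a finite $\sigma$-algebra (its atoms being the preimages $\{Z=z\}$ for $z$ in the finite support of the law of $Z$). Thus $\cG\coloneqq \sigma(Z)$ witnesses that $Z$ is a barycentric quantisation of $G$ in the sense of Definition \ref{Definition: barycentric quantisation rvs}.

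There is no real obstacle here: the corollary is essentially a restatement of Lemma \ref{thm: bar quant rv span all rv smaller in conver order} in the finite case, and the only point requiring a word of explanation is that the finiteness of $\cG$ and of the support of $Z$ translate into one another via the choice $\cG=\sigma(Z)$, which is allowed by Remark \ref{re: equiv bar quant rv}.
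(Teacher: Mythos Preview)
Your proof is correct and follows essentially the same approach as the paper's: both directions are reduced to Lemma~\ref{thm: bar quant rv span all rv smaller in conver order}, with the observation that finiteness of the values of $Z$ and finiteness of $\sigma(Z)$ are equivalent (so one may take $\cG=\sigma(Z)$).
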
 
\begin{proof} If item \ref{it: fin ineq} holds then Theorem \ref{thm: bar quant rv span all rv smaller in conver order} shows that $Z=\bE[G |Z]$; since $\sigma(Z)$ takes finitely many values,  item \ref{it: fin ce}  holds.
 Conversely, if item \ref{it: fin ce} holds then $Z$ takes finitely many values (since it is $\cG$-measurable), and  Theorem \ref{thm: bar quant rv span all rv smaller in conver order}
   implies  $Z \leqcx G$.
\end{proof}

\begin{appendix}
\section{Discrete MOT - primal LP}
\label{Section: Appendix - algo primal LP}
Consider the MOT problem
$$
\inf_{\pi\in\cM\pp{\mu, \nu}}\bE\sq{c(X, Y)}
$$
For a given $n\in \bN$, a (sub-optimal) martingale transport $\tilde{\pi} \in \cM(\mu, \nu)$ and two finite partitions $\Pi^n_1,\ \Pi^n_2$ of $B$, we can apply the proposed martingale quantisation scheme so that to reduce the above MOT to the following LP
\begin{align}
\tag{MOT-LP}
\label{eq: Discrete primal MOT}
	\begin{split}
		\min_{p\in \bR^{n_\mu\times n_\nu}_{++}} &
		\sum_{i=1}^{n_\mu}\sum_{j=1}^{n_\nu} p_{i, j}c_{i, j} \\
		\sum_{j}p_{i, j} & = \omega^{(n_\mu)}_{X,i} \quad i=1,\dots,n_\mu \\
		\sum_{i}p_{i, j} & = \omega^{(n_\nu)}_{X,i} \quad j=1,\dots,n_\nu \\
		\sum_{j}p_{i, j} & y^{(n_\nu)}_j  = \omega^{(n_\mu)}_{X,i} x^{(n_\mu)}_i \quad i=1,\dots,n_\mu
	\end{split}
\end{align}
where that $n_\mu \coloneqq \#$supp$\pp{\mu_n} \leq \#\Pi^n_1,\ n_\nu \coloneqq\#$supp$\pp{\nu_n}\leq \#\Pi^n_1 \cdot \#\Pi^n_2$.

Problem (\ref{eq: Discrete primal MOT}) can be readily implemented once recasted in the canonical form
\begin{align}
\label{eq: LP canonical form}
	\begin{split}
		 \min_{x} &\; d^T x\\
		 Ax  & =b \\
		 x &\geq 0
	 \end{split}
\end{align}
where $d$ and $x$ are (column) vectors (whereas $p$ and $c$ are matrices). To do so, it is sufficient to define $x\coloneqq$ vec$\pp{p}$ and  $d\coloneqq$ vec$\pp{c}$, i.e.~the vectorisations of the transport $p$ and the cost matrix $c$. The constraint matrix is given by
$
A\coloneqq 
\begin{bmatrix}
	\text{A}_{\cC_1},\
	\text{A}_{\cC_2},\
	\text{A}_{\cC_3}
\end{bmatrix}^T
$
where the matrices 
$\text{A}_{\cC_1}$,
$\text{A}_{\cC_2}$ and
$\text{A}_{\cC_3}$
(whose dimensions are:
$n_\mu \times n_\mu n_\nu, \,
n_\nu \times n_\mu n_\nu \,$ and
$n_\mu \times n_\mu n_\nu$, resp.)
are defined by

\newcommand\undermat[2]{%
  \makebox[0pt][l]{$\smash[b]{\underbrace{
  \phantom{
    \begin{array}{cccc}
	00&00&00&00
    \end{array}
  }
  }_{\text{$#1$}}}$}
    #2    	
    }
\newcommand\undermatshort[2]{%
  \makebox[0pt][l]{$\smash[b]{\underbrace{
  \phantom{
    \begin{array}{ccc}
	00&0&00
    \end{array}
  }
  }_{\text{$#1$}}}$}
    #2    	
    }
\newcommand\undermatlong[2]{%
  \makebox[0pt][l]{$\smash[b]{\underbrace{
  \phantom{
    \begin{array}{cccc}
	0000&000&00&0000
    \end{array}
  }
  }_{\text{$#1$}}}$}
    #2    	
    }
    
{\small{
\begin{align*}
\text{A}_{\cC_1} &\coloneqq
\left[
\begin{array}{cccccccccc}
\undermat{n_\nu}{1 & 0 & $\dots$ & 0} &
\undermat{n_\nu}{1 & 0 & $\dots$ & 0} &
\dots & \dots \\
\\
\\
0 & \undermat{n_\nu}{1 & 0 & $\dots$ & 0} &
\undermat{n_\nu}{1 & 0 & $\dots$ & 0} &
\dots\\
\\
\\
\dots &
\dots &
\dots &
\dots &
\dots &
\dots &
\dots &
\dots &
\dots &
\dots
\\
\\
\undermatshort{n_\nu -1}{0 & $\dots$ & 0} & 
\undermat{n_\nu}{1 & 0 & $\dots$ & 0} &
\dots & 
\dots & 1
\end{array}
\right],
\\
\\
\text{A}_{\cC_2} &\coloneqq
\left[
\begin{array}{cccccccccc}
\undermat{n_\mu}{1 & $\dots$ & $\dots$ & 1} &
0 & 
0 & 
\dots & 
\dots &
\dots & 0
\\
\\
0 & \undermat{n_\mu}{1 & $\dots$ & $\dots$ & 1} &
0 & 
0 & 
\dots &
\dots & 0
\\
\\
\dots &
\dots &
\dots &
\dots &
\dots &
\dots &
\dots &
\dots &
\dots &
\dots
\\
\\
0 &
0 &
\dots &
\dots &
\dots &
0 &
\undermat{n_\mu}{1 & $\dots$ & $\dots$ & 1}
\end{array}
\right],
\\
\\
\text{A}_{\cC_3} &\coloneqq
\left[
\begin{array}{ccccccccccccc}
\undermatlong{n_\nu}{y^{(n_\nu)}_1 & 0 & $\dots$ & 0} &
\undermatlong{n_\nu}{y^{(n_\nu)}_2 & 0 & $\dots$ & 0} &
\dots & 
\undermatlong{n_\nu}{y^{(n_\nu)}_{n_\nu} & 0 & $\dots$ & 0}
\\
\\
\\
\undermatlong{n_\nu}{0 & y^{(n_\nu)}_1  & $\dots$ & 0} &
\undermatlong{n_\nu}{0 & y^{(n_\nu)}_2  & $\dots$ & 0} &
\dots & 
\undermatlong{n_\nu}{0 & y^{(n_\nu)}_{n_\nu} & $\dots$ & 0}
\\
\\
\\
\dots &
\dots &
\dots &
\dots &
\dots &
\dots &
\dots &
\dots &
\dots &
\dots &
\dots &
\dots &
\dots 
\\
\\
\undermatlong{n_\nu}{0 & 0 & $\dots$ & y^{(n_\nu)}_1} &
\undermatlong{n_\nu}{0 & 0 & $\dots$ & y^{(n_\nu)}_2} &
\dots & 
\undermatlong{n_\nu}{0 & $\dots$ & 0 & y^{(n_\nu)}_{n_\nu}}
\end{array}
\right],
\end{align*}
}}
\\
\noindent whereas 
$b \coloneqq
\begin{bmatrix}
	\omega^{(n_\mu)}_X,\ 
	\omega^{(n_\nu)}_Y,\ 
	\omega^{(n_\mu)}_X \cdot x^{(n_\mu)}
\end{bmatrix}^T
$.

\begin{remark}
	Problem (\ref{eq: Discrete primal MOT}) recasted in canonical form has $n_\mu n_\nu$ variables and $2n_\mu+n_\nu$ constraints.
\end{remark}
\end{appendix}

\bibliography{Quantise_convex_order}{}
\bibliographystyle{abbrv}

\end{document}